\theoremstyle{plain}
\newtheorem*{corollary}{Corollary}
\newtheorem{lemma}{Lemma}
\newtheorem{theorem}{Theorem}
\newtheorem*{conjecture}{Conjecture}
\theoremstyle{remark}
\newtheorem*{remark}{Remark}
\theoremstyle{definition}
\newtheorem{example}{Example}
\DeclareMathOperator{\Id}{Id}
\DeclareMathOperator{\id}{id}
\DeclareMathOperator{\ch}{char}
\DeclareMathOperator{\ad}{ad}
\DeclareMathOperator{\tr}{tr}
\DeclareMathOperator{\End}{End}
\DeclareMathOperator{\PIexp}{PIexp}
\begin{document}

\title[Co-stability of radicals]{Co-stability of radicals and its applications to PI-theory}

\author{A.\,S.~Gordienko}
\address{Vrije Universiteit Brussel, Belgium}
\email{alexey.gordienko@vub.ac.be} 

\keywords{Associative algebra, Lie algebra, Jacobson radical, nilpotent radical, solvable radical, polynomial identity, grading, Hopf algebra, $H$-comodule algebra, $H$-module algebra, codimension, Amitsur's conjecture.}

\begin{abstract}
We prove that if $A$ is a finite dimensional associative $H$-comodule algebra
over a field $F$ for some involutory Hopf algebra $H$ not necessarily finite dimensional,
where either $\ch F = 0$ or $\ch F > \dim A$, then the Jacobson radical $J(A)$
is an $H$-subcomodule of $A$.
In particular, if $A$ is a finite dimensional associative algebra over such a field $F$, graded by any group, then the Jacobson radical $J(A)$ is a graded ideal of $A$.
Analogous results hold for nilpotent and solvable radicals of finite dimensional Lie algebras over a field of characteristic $0$.
We use the results obtained to prove the analog of Amitsur's conjecture for graded polynomial
identities of finite dimensional associative algebras over a field of characteristic $0$, graded
by any group.\end{abstract}

\subjclass[2010]{Primary 16W50; Secondary 17B05, 16R10, 16R50, 17B70, 16T05, 16T15.}

\thanks{Supported by Fonds Wetenschappelijk Onderzoek~--- Vlaanderen Pegasus Marie Curie post doctoral fellowship (Belgium).}

\maketitle

\section{Introduction}

If an algebra is endowed with an additional structure, e.g. grading, action of a group,
Lie or Hopf algebra, a natural question arises whether the radical is invariant with respect to this
structure. 
 
In 1984 M.~Cohen and S.~Montgomery~\cite{CohenMont} proved
that the Jacobson radical of an associative algebra
graded by a finite group $G$ is graded if $|G|^{-1}$
belongs to the base field. In 2001 V.~Linchenko~\cite{LinchenkoJH} proved
the stability of the Jacobson radical of a finite dimensional $H$-module associative algebra
for an involutory Hopf algebra~$H$.
This result was later generalized by V.~Linchenko, S.~Montgomery, L.W.~Small, and S.M.~Skryabin~\cite{LinMontSmall, Skryabin}.

In 2011, D.~Pagon, D.~Repov\v s, and M.V.~Zaicev~\cite[Proposition 3.3 and its proof]{PaReZai} proved that
the solvable radical of a finite dimensional Lie algebra over an algebraically closed field 
of characteristic $0$, graded by any group, is graded. In~2012, the author~\cite[Theorem~1]{ASGordienko4} proved that the solvable and the nilpotent radical of a finite dimensional $H$-(co)module Lie algebra over a field of characteristic $0$ are $H$-(co)invariant for any finite dimensional (co)semisimple Hopf algebra $H$. (In fact, exactly the same arguments can be used to derive the stability of the radicals 
in any finite dimensional $H$-module Lie algebra for an arbitrary involutory Hopf algebra $H$ not necessarily finite dimensional.)

Here we prove the co-stability of the Jacobson radical of a finite dimensional $H$-comodule associative algebra $A$ for an involutory Hopf algebra~$H$ under some restrictions on the characteristic of the base field (Theorem~\ref{TheoremRadicalHSubComod}). Unfortunately, if $H$ is infinite dimensional, we cannot derive this result from V.~Linchenko's one directly since the dual algebra $H^*$ is not necessarily a coalgebra.
However, using the fact that $A$ is finite dimensional, we provide a substitute for a comultiplication in $H^*$ (see Lemma~\ref{LemmaSubstituteComult}), which is enough for our purposes. 

As a consequence, we prove that the nilpotent and the solvable radical of a finite dimensional $H$-comodule Lie algebra
for an involutory Hopf algebra~$H$ over a field of characteristic $0$, are $H$-subcomodules (Theorem~\ref{TheoremLieRadicalHSubComod}).

As another consequence, we show that the radicals of finite dimensional algebras graded by an arbitrary group are graded. The latter result is used to prove the analog of Amitsur's conjecture for 
graded codimensions of finite dimensional algebras.

The original Amitsur's conjecture was proved in 1999 by
A.~Giambruno and M.V.~Zaicev~\cite[Theorem~6.5.2]{ZaiGia} for all associative PI-algebras.
  Alongside with ordinary polynomial
identities of algebras, graded, differential, $G$- and
$H$-identities are
important too~\cite{BahtGiaZai, BahtZaiGradedExp, BereleHopf}.
 Usually, to find such identities is easier
  than to find the ordinary ones. Furthermore, each of these types of identities completely determines  the ordinary polynomial identities. 
Therefore the question arises whether the conjecture
holds for graded codimensions, $G$-, $H$-codimensions,
and codimensions of polynomial identities with derivations.
The analog of Amitsur's conjecture
for codimensions of graded identities was proved in 2010--2011 by
E.~Aljadeff,  A.~Giambruno, and D.~La~Mattina~\cite{AljaGia, AljaGiaLa, GiaLa}
  for all associative PI-algebras graded by a finite group.
   In 2012, the author and M.V~Kotchetov~\cite[Theorem~13]{GordienkoKotchetov} proved
 the analog of Amitsur's conjecture  for graded polynomial
 identities of finite dimensional associative algebras
 graded by an Abelian group.
 
  Here we prove this conjecture for finite dimensional associative algebras graded by an arbitrary group
 (Theorem~\ref{TheoremMainGrAssoc}).

\section{$H$-comodule algebras}

Let $H$ be a Hopf algebra over a field $F$ with a comultiplication $\Delta \colon H \to H\otimes H$,
a counit $\varepsilon \colon H\to F$, and an antipode $S \colon H \to H$.
We say that $H$ is \textit{involutory} if $S^2=\id_H$.
Throughout the paper we use Sweedler's notation
$\Delta h = h_{(1)} \otimes h_{(2)}$ where $\Delta$ is the comultiplication
in $H$. We refer the reader to~\cite{Danara, Montgomery, Sweedler}
   for an account of Hopf algebras and algebras with Hopf algebra actions.

Recall that an algebra $A$ is a \textit{(right) $H$-comodule algebra} if 
$A$ is a right $H$-comodule and $\rho(ab)=a_{(0)}b_{(0)} \otimes a_{(1)}b_{(1)}$
for all $a, b \in A$ where $\rho \colon A \to A \otimes H$ is the comodule map.
(Here we use Sweedler's notation $\rho(a)=a_{(0)}\otimes a_{(1)}$
for $a\in A$.)

\begin{example}Let $A=\bigoplus_{g\in G}A^{(g)}$ be a graded algebra for some group $G$.
Then $A$ is an $FG$-comodule algebra for the group algebra $FG$
where $\rho(a^{(g)})=a^{(g)}\otimes g$ for all $a^{(g)}\in A^{(g)}$, $g\in G$.
\end{example}

 Note that an $H$-comodule algebra $A$ is a left $H^*$-module where $H^*$ is the algebra dual to the coalgebra $H$
 and $h^*a := h^*(a_{(1)})a_{(0)}$ for all $a\in A$ and $h^* \in H^*$.
 If $H$ is infinite dimensional, then it is not always possible to define the structure
 of a coalgebra on $H^*$ dual to the algebra $H$. However, we can provide some substitute for
 a comultiplication in $H^*$.

\begin{lemma}\label{LemmaSubstituteComult}
Suppose $A$ is a finite dimensional $H$-comodule algebra for some Hopf algebra $H$
over a field $F$. Choose a finite dimensional subspace $H_1$
such that $\rho(A)\subseteq A \otimes H_1$.
Then for every $h^*\in H^*$ there exist $s\in \mathbb N$, ${h_i^*}', {h_i^*}''\in H^*$, $1\leqslant i\leqslant s$, such that $$h^*(hq) = \sum_{i=1}^s{h_i^*}'(h){h_i^*}''(q) \text{
for all } h,q \in H_1.$$ In particular, $h^*(ab)=\sum_{i=1}^s ({h_i^*}'a)({h_i^*}''b)$ for all $a,b \in A$.
\end{lemma}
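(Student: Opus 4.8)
The plan is to lean entirely on the one structural hypothesis at hand, namely that $H_1$ is finite dimensional, so that the multiplication of $H$ restricted to $H_1\otimes H_1$ has finite-dimensional image. Concretely, I would introduce the (finite-dimensional) span $H_2$ of all products $hq$ with $h,q\in H_1$; since $H_1\otimes H_1$ is finite dimensional and multiplication is linear, $H_2$ is a finite-dimensional subspace of $H$. Hence a given $h^*\in H^*$ enters the desired identity only through its restriction to $H_2$, and the assignment $(h,q)\mapsto h^*(hq)$ defines a bilinear form $B\colon H_1\times H_1\to F$ on a finite-dimensional space. The whole task is to split $B$ into a finite sum of products of functionals and then to promote those functionals from $H_1$ to $H$.

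The first genuine step is the elementary linear-algebra fact that every bilinear form on a finite-dimensional space is a finite sum of products of two linear functionals. Fixing a basis $e_1,\dots,e_n$ of $H_1$ with dual coordinate functionals $\lambda_1,\dots,\lambda_n$ (so that $h=\sum_k \lambda_k(h)\,e_k$ for $h\in H_1$), I would simply expand
\[
B(h,q)=\sum_{i,j=1}^n B(e_i,e_j)\,\lambda_i(h)\,\lambda_j(q),
\]
which already exhibits $B$ as a sum of $s=n^2$ products of functionals defined \emph{on $H_1$}. What remains is to turn these into elements of $H^*$. Because $H_1$ is finite dimensional it is a direct summand of $H$, so I would fix a projection $\pi\colon H\to H_1$ and, after relabelling the pairs $(i,j)$ by a single index $1\leqslant i\leqslant s$ and absorbing the scalar $B(e_i,e_j)$ into the first factor, set ${h_i^*}'$ and ${h_i^*}''$ to be the corresponding functionals $\lambda\circ\pi\in H^*$. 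Each such functional restricts on $H_1$ to the required coordinate functional, and since $h,q$ range only over $H_1$ in the claimed identity, this yields $h^*(hq)=\sum_{i=1}^s {h_i^*}'(h)\,{h_i^*}''(q)$.

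The \emph{in particular} statement is then a direct Sweedler-notation computation. By the comodule-algebra axiom $\rho(ab)=a_{(0)}b_{(0)}\otimes a_{(1)}b_{(1)}$ and the definition $h^*x=h^*(x_{(1)})x_{(0)}$ of the $H^*$-action, together with the fact that $a_{(1)},b_{(1)}\in H_1$, one gets $h^*(ab)=h^*(a_{(1)}b_{(1)})\,a_{(0)}b_{(0)}$; applying the form identity with $h=a_{(1)}$, $q=b_{(1)}$ and regrouping the factors gives $\sum_{i=1}^s ({h_i^*}'a)({h_i^*}''b)$, as claimed.

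I expect no serious obstacle, and indeed the value of the lemma is precisely that the naive approach fails: when $H$ is infinite dimensional the transpose of the multiplication $m\colon H\otimes H\to H$ need not carry $H^*$ into $H^*\otimes H^*\subsetneq (H\otimes H)^*$, so $H^*$ admits no comultiplication and $h^*$ cannot be split globally. The only real idea is therefore to localize to the finite-dimensional space $H_2=H_1 H_1$, where this defect disappears and the splitting reduces to the trivial decomposition of a bilinear form; the finite-dimensionality of $A$ (hence the availability of a finite-dimensional $H_1$ with $\rho(A)\subseteq A\otimes H_1$) is exactly what legitimizes this localization.
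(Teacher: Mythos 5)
Your proof is correct and follows essentially the same route as the paper's: your explicit basis expansion of the bilinear form $(h,q)\mapsto h^*(hq)$ is just an unwinding of the paper's identification $(H_1\otimes H_1)^*\cong H_1^*\otimes H_1^*$, and your extension of functionals via a projection $H\to H_1$ matches the paper's ``extending linear functions from $H_1$ to $H$.'' The Sweedler-notation derivation of the ``in particular'' statement is identical to the paper's as well.
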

\begin{proof}
Consider the map $\Xi \colon H^* \to (H_1 \otimes H_1)^*$
where $\Xi(h^*)(h \otimes q) = h^*(hq)$ for $h^* \in H^*$,
$h,q \in H$. Using the natural identification $(H_1 \otimes H_1)^*=H_1^*\otimes H_1^*$,
we get $\Xi(h^*) = \sum_{i=1}^s {h_i^*}' \otimes {h_i^*}''$
for some $s\in \mathbb N$, ${h_i^*}', {h_i^*}''\in H_1^*$, $1\leqslant i\leqslant s$.
Extending linear functions from $H_1$ to $H$, we may assume that
${h_i^*}', {h_i^*}''\in H^*$.
Then $h^*(hq) = \sum_{i=1}^s{h_i^*}'(h){h_i^*}''(q)$
for all $h,q \in H_1$.
In particular, 
$$h^*(ab)=h^*(a_{(1)}b_{(1)})a_{(0)}b_{(0)} = 
\sum_{i=1}^s {h_i^*}'(a_{(1)}){h_i^*}''(b_{(1)}) a_{(0)}b_{(0)}
=\sum_{i=1}^s ({h_i^*}'a)({h_i^*}''b).$$ for all $a,b \in A$.
\end{proof}

\begin{remark}
Throughout the paper we choose $H_1$ as follows. First, we choose a finite dimensional subspace $H_2\subseteq H$ such that $\rho(A)\subseteq A \otimes H_2$.
Second, we choose a finite dimensional subspace $H_2 \subseteq H_3\subseteq H$ such that $\Delta(H_2)\subseteq H_3 \otimes H_3$. Finally, we define $H_1 := H_3 + SH_3$. Now we may assume that $a_{(1)}, a_{(2)}, Sa_{(1)},
Sa_{(2)} \in H_1$ for all $a\in A$  in expressions like $$(\id_A \otimes \Delta)\rho(a)=a_{(0)}\otimes a_{(1)}\otimes a_{(2)}.$$
\end{remark}

  \begin{lemma}\label{LemmaHComoduleIdeal}
  Let $A$ be a finite dimensional $H$-comodule algebra over a field $F$
for some Hopf algebra $H$ with a bijective antipode. If $I$ is an ideal of $A$, then $H^*I$
is an ideal of $A$ too.
  \end{lemma}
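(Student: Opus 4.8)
The plan is to show the two inclusions $A\cdot(H^{*}I)\subseteq H^{*}I$ and $(H^{*}I)\cdot A\subseteq H^{*}I$; since $H^{*}I$ is by definition the $F$-span of the elements $h^{*}\cdot i$ with $h^{*}\in H^{*}$, $i\in I$ (I write $f\cdot x=f(x_{(1)})x_{(0)}$ for the action, to keep it typographically distinct from an evaluation $g(h)$), it is already a subspace, and the two inclusions make it an ideal. The guiding idea is the classical computation showing that in an $H^{*}$-module algebra the submodule generated by an ideal is again an ideal: one moves a factor of $A$ across the action at the cost of an antipode. Writing the substitute comultiplication of Lemma~\ref{LemmaSubstituteComult} as $\Xi(h^{*})=\sum_{k}g_{k}\otimes e_{k}$, and letting $g_{k}\circ S^{-1}$ and $e_{k}\circ S$ denote the functionals $h\mapsto g_{k}(S^{-1}h)$ and $h\mapsto e_{k}(Sh)$ (these lie in $H^{*}$, and $S^{-1}$ exists by the hypothesis of a bijective antipode), I would establish the two identities
\[
a(h^{*}\cdot i)=\sum_{k}e_{k}\cdot\Big(\big((g_{k}\circ S^{-1})\cdot a\big)\,i\Big),\qquad (h^{*}\cdot i)\,a=\sum_{k}g_{k}\cdot\Big(i\,\big((e_{k}\circ S)\cdot a\big)\Big)
\]
for all $a\in A$ and $i\in I$. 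In each identity the inner element $\big((g_{k}\circ S^{-1})\cdot a\big)i$, respectively $i\big((e_{k}\circ S)\cdot a\big)$, lies in $I$ because $I$ is an ideal and $(g_{k}\circ S^{-1})\cdot a,\ (e_{k}\circ S)\cdot a\in A$; hence each right-hand side is visibly an element of $H^{*}I$, and the two inclusions follow at once.

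To verify the first identity I would expand its right-hand side using only the comodule structure $\rho(x)=x_{(0)}\otimes x_{(1)}$, the action $f\cdot x=f(x_{(1)})x_{(0)}$, and Lemma~\ref{LemmaSubstituteComult}. Since $(g_{k}\circ S^{-1})\cdot a=g_{k}(S^{-1}a_{(1)})\,a_{(0)}$, the $k$-th summand equals $g_{k}(S^{-1}a_{(1)})\,e_{k}\cdot(a_{(0)}i)$; applying $\rho$ to $a_{(0)}i$ and then $e_{k}$, and using coassociativity of the coaction in the form $a_{(0)(0)}\otimes a_{(0)(1)}\otimes a_{(1)}=a_{(0)}\otimes a_{(1)}\otimes a_{(2)}$, the whole right-hand side collapses to $\sum_{k}g_{k}(S^{-1}a_{(2)})\,e_{k}(a_{(1)}i_{(1)})\,a_{(0)}i_{(0)}$. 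Now I would read Lemma~\ref{LemmaSubstituteComult} from right to left, with $h=S^{-1}a_{(2)}$ and $q=a_{(1)}i_{(1)}$, to rewrite this as $h^{*}\big(S^{-1}(a_{(2)})\,a_{(1)}\,i_{(1)}\big)\,a_{(0)}\,i_{(0)}$. Finally the antipode and counit axioms of $H$, which give the tensor identity $\sum_{(a)}a_{(0)}\otimes S^{-1}(a_{(2)})a_{(1)}=a\otimes 1_{H}$, reduce the $a$-summation inside $h^{*}$ and leave $\sum_{(i)}h^{*}(i_{(1)})\,a\,i_{(0)}=a(h^{*}\cdot i)$, as required. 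The second identity is checked symmetrically, with $S$ in place of $S^{-1}$, invoking the mirror axiom $\sum_{(a)}a_{(0)}\otimes a_{(1)}S(a_{(2)})=a\otimes 1_{H}$.

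The delicate point, and the reason this is not an immediate corollary of the module-algebra case, is that Lemma~\ref{LemmaSubstituteComult} factorizes $h^{*}$ only on $H_{1}\otimes H_{1}$, so every use of it must keep both arguments inside the fixed finite-dimensional subspace $H_{1}$. In the step above the second argument is a product $a_{(1)}i_{(1)}$ of two coaction components, which need not lie in the subspace $H_{1}=H_{3}+SH_{3}$ of the Remark. I would therefore enlarge $H_{1}$ at the outset to a finite-dimensional subspace also containing $S^{-1}H_{3}$, $SH_{3}$, and the (finite-dimensional) span of all pairwise products of coaction components; this is harmless since $A$ is finite dimensional, so only finitely many coaction components occur and every subspace in sight is finite dimensional. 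I expect the main obstacle to be precisely this bookkeeping: keeping each invocation of the substitute comultiplication within $H_{1}$ while correctly propagating the Sweedler indices of the coaction through the coassociativity relabelling, so that the antipode collapse $\sum_{(a)}a_{(0)}\otimes S^{-1}(a_{(2)})a_{(1)}=a\otimes 1_{H}$ can be performed inside the argument of $h^{*}$.
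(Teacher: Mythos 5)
Your proposal is correct and follows essentially the same route as the paper: your two identities are exactly the paper's equations~(\ref{EqMoveHFirst}) and~(\ref{EqMoveHSecond}) (in the paper's notation $g_k\circ S^{-1}=(S^{-1})^*{h^*_k}'$ and $e_k\circ S=S^*{h^*_k}''$), built from the same ingredients — the substitute comultiplication of Lemma~\ref{LemmaSubstituteComult} and the antipode collapses $a_{(0)}\otimes a_{(1)}Sa_{(2)}=a\otimes 1$ and $a_{(0)}\otimes S^{-1}(a_{(2)})a_{(1)}=a\otimes 1$ — the only cosmetic difference being that the paper runs the computation from the left-hand side (inserting $\varepsilon(b_{(1)})$) while you expand the right-hand side. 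Your bookkeeping worry is legitimate but harmless, and your fix is the right one: the paper itself applies the factorization to products such as $a_{(1)}b_{(1)}\in H_3H_3$ without comment, and enlarging $H_1$ by the finite-dimensional spaces $H_3H_3$ and $S^{-1}H_3$ settles the point.
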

  \begin{proof} Let $a \in I$, $b\in A$, $h^*\in H^*$. Then
  $$(h^*a)b=h^*(a_{(1)})a_{(0)}b=h^*(\varepsilon(b_{(1)})a_{(1)})a_{(0)}b_{(0)}
  =h^*(a_{(1)}b_{(1)}Sb_{(2)})a_{(0)}b_{(0)}
  =$$ \begin{equation}\label{EqMoveHFirst}\sum_i {h^*_i}'(a_{(1)}b_{(1)}){h^*_i}''(Sb_{(2)})a_{(0)}b_{(0)}
  =\sum_i {h^*_i}'(a (S^*{h^*_i}'') b) \in H^*I.\end{equation}
  Similarly,
  $$b(h^*a)=h^*(a_{(1)})ba_{(0)}=h^*(\varepsilon(b_{(1)})a_{(1)})b_{(0)}a_{(0)}
  =h^*((S^{-1}b_{(2)})b_{(1)} a_{(1)})b_{(0)}a_{(0)}
  =$$ \begin{equation}\label{EqMoveHSecond}\sum_i {h^*_i}'(S^{-1}b_{(2)}){h^*_i}''(b_{(1)}a_{(1)})b_{(0)}a_{(0)}
  =\sum_i {h^*_i}''((((S^{-1})^*{h^*_i}') b) a) \in H^*I.\end{equation}
  \end{proof}
  
  In addition, we need the Wedderburn~--- Artin theorem for $H$-comodule algebras:
  
  \begin{lemma}\label{LemmaWedderburnHcomod}
  Let $B$ be a finite dimensional semisimple associative $H$-comodule algebra over a field $F$
  for some Hopf algebra $H$ with a bijective antipode. Then $B = B_1 \oplus B_2 \oplus \ldots \oplus B_s$
  (direct sum of $H$-coinvariant ideals) for some $H$-simple algebras $B_i$.
  \end{lemma}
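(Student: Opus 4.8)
The plan is to adapt the classical Wedderburn--Artin decomposition to the comodule setting by inductively peeling off minimal $H$-coinvariant ideals. First I would recall that since $B$ is finite dimensional and semisimple as an ordinary associative algebra, it is a direct sum of minimal two-sided ideals, each of which is a simple algebra; the task is to produce a decomposition whose summands are not merely simple but $H$-coinvariant and $H$-simple.

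\smallskip

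The key step is to work with minimal \emph{$H$-coinvariant} ideals. I would argue that $B$, being semisimple, has no nonzero nilpotent ideals, so in particular $B$ is a direct sum of its minimal $H$-coinvariant ideals provided these are themselves semisimple (equivalently, provided they admit complements). To build such a decomposition concretely, let $B_1$ be a minimal nonzero $H$-coinvariant ideal of $B$. By minimality and $H$-simplicity of $B_1$, I would show that $B_1$ is itself an $H$-simple algebra. The crucial point is that in a semisimple algebra every ideal is generated by a central idempotent; here I would use Lemma~\ref{LemmaHComoduleIdeal} to control $H^*$-translates. Since $B$ is semisimple, the two-sided annihilator relationship gives $B = B_1 \oplus \operatorname{Ann}(B_1)$, where $\operatorname{Ann}(B_1)$ is the ideal of elements annihilating $B_1$ on both sides. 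I would then check that $\operatorname{Ann}(B_1)$ is again $H$-coinvariant: applying any $h^* \in H^*$ to an element of $\operatorname{Ann}(B_1)$ and using the substitute comultiplication from Lemma~\ref{LemmaSubstituteComult} to expand products $h^*(xb)$, one verifies that $h^* \operatorname{Ann}(B_1) \subseteq \operatorname{Ann}(B_1)$, so $\operatorname{Ann}(B_1)$ is an $H$-subcomodule. Because $\operatorname{Ann}(B_1)$ is a direct summand of a semisimple algebra it is itself semisimple, and it inherits the $H$-comodule algebra structure.

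\smallskip

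With $B = B_1 \oplus \operatorname{Ann}(B_1)$ established as a direct sum of $H$-coinvariant ideals with $B_1$ being $H$-simple and $\operatorname{Ann}(B_1)$ semisimple, I would finish by induction on $\dim B$: applying the induction hypothesis to $\operatorname{Ann}(B_1)$ yields $\operatorname{Ann}(B_1) = B_2 \oplus \cdots \oplus B_s$ with each $B_i$ an $H$-coinvariant $H$-simple ideal, and combining with $B_1$ gives the asserted decomposition.

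\smallskip

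The main obstacle I anticipate is verifying that the relevant complement is genuinely $H$-coinvariant when $H$ is infinite dimensional, since one cannot simply dualize the comultiplication. This is exactly where Lemma~\ref{LemmaSubstituteComult} must be invoked carefully: the expansion $h^*(ab) = \sum_i ({h_i^*}'a)({h_i^*}''b)$ is what lets one propagate $H$-coinvariance of $B_1$ across the annihilator, and one must be attentive that the finite-dimensional subspace $H_1$ controlling $\rho$ is chosen uniformly (as in the Remark) so that all the Sweedler components land in $H_1$. A secondary subtlety is confirming that a minimal $H$-coinvariant ideal is $H$-simple rather than merely minimal, but this follows directly from the definition of $H$-simplicity as having no proper nonzero $H$-coinvariant ideals.
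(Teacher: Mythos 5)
Your plan has the same skeleton as the paper's own proof: take a minimal $H$-coinvariant ideal $B_1$, complement it by the two-sided annihilator $\Ann(B_1)$ (the paper's $\tilde B_1$), show that this annihilator is again an $H$-subcomodule, and induct. However, the mechanism you give for that crucial verification would not work as stated. You propose to prove $h^*\Ann(B_1)\subseteq \Ann(B_1)$ by expanding $h^*(ab)=\sum_i ({h_i^*}'a)({h_i^*}''b)$ for $a\in\Ann(B_1)$, $b\in B_1$. But for such $a,b$ one has $ab=0$, so this identity only says $\sum_i ({h_i^*}'a)({h_i^*}''b)=0$; it gives no control over the single product $(h^*a)b$ that you actually need to show vanishes. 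The identity points in the wrong direction: Lemma~\ref{LemmaSubstituteComult} expands the \emph{translate of a product}, whereas here one must expand the \emph{product of a translate}, and there is in general no functional whose substitute comultiplication isolates $h^*\otimes\varepsilon$.

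The step that closes this gap is exactly the computation in the proof of Lemma~\ref{LemmaHComoduleIdeal}: write $(h^*a)b=h^*\bigl(a_{(1)}\varepsilon(b_{(1)})\bigr)a_{(0)}b_{(0)}$, replace the scalar $\varepsilon(b_{(1)})$ by $b_{(1)}Sb_{(2)}$ via the antipode axiom, and only then apply Lemma~\ref{LemmaSubstituteComult} to split $h^*$ across the product $(a_{(1)}b_{(1)})(Sb_{(2)})$. This yields~(\ref{EqMoveHFirst}), i.e. $(h^*a)b=\sum_i {h^*_i}'\bigl(a\,((S^*{h^*_i}'')b)\bigr)$; since $B_1$ is an $H$-subcomodule, $(S^*{h^*_i}'')b\in B_1$, so every term dies because $a$ annihilates $B_1$. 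The left-sided statement $b(h^*a)=0$ requires~(\ref{EqMoveHSecond}), whose derivation uses $S^{-1}$ --- this is precisely where the hypothesis that the antipode is bijective enters, a point your write-up never touches. You do cite Lemma~\ref{LemmaHComoduleIdeal}, but only ``to control $H^*$-translates'' in connection with central idempotents, which is not where it is needed: its statement is irrelevant here, while the displayed identities inside its proof are the whole point. Once your expansion of $h^*(ab)$ is replaced by~(\ref{EqMoveHFirst}) and~(\ref{EqMoveHSecond}), the rest of your argument (the decomposition $B=B_1\oplus\Ann(B_1)$, $H$-simplicity of $B_1$ from minimality, and induction) goes through exactly as in the paper.
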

  \begin{proof}
  By the original Wedderburn~--- Artin theorem, $B=A_1 \oplus \ldots \oplus A_s$ (direct sum of ideals) 
where $A_i$ are simple algebras not necessarily $H$-subcomodules.
Let $B_1$ be a minimal ideal of $A$ that is an $H$-subcomodule.
 Then $B_1 = A_{i_1} \oplus \ldots \oplus A_{i_k}$
for some $i_1, i_2, \ldots, i_k \in \lbrace 1, 2, \ldots, s\rbrace$. 
Consider $\tilde B_1 = \lbrace a \in A \mid ab=ba=0 \text{ for all } b \in B_1 \rbrace$.
Then $\tilde B_1$ equals the sum of all $A_j$, $j \notin \lbrace 
i_1, i_2, \ldots, i_k\rbrace$, and $A = B_1 \oplus \tilde B_1$.
 We claim that $\tilde B_1$ is an $H$-subcomodule. It is sufficient to prove that for every $h^* \in H^*$, $a \in \tilde B_1$,
 $b \in B_1$, we have $h^*(a_{(1)})a_{(0)} b = h^*(a_{(1)}) b a_{(0)} =0$.
   However, this follows from~(\ref{EqMoveHFirst}) and~(\ref{EqMoveHSecond}).
  Hence $\tilde B_1$ is an $H$-subcomodule and the inductive argument finishes the proof.
  \end{proof}

\section{Co-stability of the Jacobson radical}

Note that if $A$ is a finite dimensional $H$-comodule algebra for a Hopf algebra $H$, then 
$\End_F(A)$ is an $H$-comodule algebra where $\psi_{(0)}a \otimes \psi_{(1)}
= \psi(a_{(0)})_{(0)} \otimes \psi(a_{(0)})_{(1)}(Sa_{(1)})$
for $\psi \in \End_F(A)$ and $a\in A$.
 Moreover, 
 $$(h^*\psi)a = h^*(\psi(a_{(0)})_{(1)}(Sa_{(1)})) \psi(a_{(0)})_{(0)}
 = \sum_i {h^*_i}'\psi((S^*{h^*_i}'') a)$$
for $h^*\in H^*$, $\psi \in \End_F(A)$, and $a\in A$. 
 Hence $h^*\psi = \sum_i \zeta({h^*_i}') \psi \zeta(S^*{h^*_i}'')$
 for all $h^* \in H^*$ and $\psi \in \End_F(A)$
 where $\zeta \colon H^* \to \End_F(A)$ is the map corresponding to the $H^*$-module
 structure on $A$.

 \begin{lemma}\label{LemmaTraceHRad}
Let $A$ be a finite dimensional $H$-comodule algebra over a field $F$
for some Hopf algebra $H$ with $S^2=\id_H$.
 Consider the left regular representation $\Phi \colon A \to \End_F(A)$ where $\Phi(a)b := ab$.
 Then $$\tr(\Phi(h^* a))=h^*(1) \tr(\Phi(a))\text { for all } h^{*}\in H^{*} \text{ and } a \in A.$$
 \end{lemma}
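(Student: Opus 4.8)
The plan is to compute the trace of $\Phi(h^*a)$ by expressing $h^*a$ explicitly through the comodule structure and exploiting the multiplicativity of $\rho$. I would first write $h^*a = h^*(a_{(1)})a_{(0)}$, so that $\Phi(h^*a)b = h^*(a_{(1)})a_{(0)}b$ for all $b\in A$. The key observation is that the comodule-algebra axiom $\rho(ab)=a_{(0)}b_{(0)}\otimes a_{(1)}b_{(1)}$ lets me track the $H$-component of the product, which is exactly what $h^*$ evaluates against. To take a trace, I would fix a basis $\{e_j\}$ of $A$ and write $\Phi(h^*a)e_j = \sum_k c_{kj} e_k$, so that $\tr(\Phi(h^*a))=\sum_j c_{jj}$; the goal is to show this equals $h^*(1)\tr(\Phi(a))=h^*(1)\sum_j d_{jj}$ where $\Phi(a)e_j=\sum_k d_{kj}e_k$.

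The heart of the argument should be a trick that relates the $H$-component of the diagonal entries of left multiplication to the counit or unit. The natural idea is to use the coinvariance structure encoded in the expression $\psi_{(0)}\otimes\psi_{(1)}$ on $\End_F(A)$ introduced just before this lemma, where $\Phi$ itself should be an $H$-comodule map intertwining $\rho$ on $A$ with the comodule structure on $\End_F(A)$. Concretely, I expect that $\Phi(a)_{(0)}\otimes\Phi(a)_{(1)}=\Phi(a_{(0)})\otimes a_{(1)}$, i.e. $\Phi$ is a morphism of comodules; granting this, $h^*\Phi(a)=\Phi(h^*a)$, and the problem reduces to computing $\tr(h^*\Phi(a))$ using the formula $h^*\psi=\sum_i\zeta({h_i^*}')\psi\,\zeta(S^*{h_i^*}'')$ displayed above. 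Then $\tr(\Phi(h^*a))=\sum_i\tr\bigl(\zeta({h_i^*}')\Phi(a)\zeta(S^*{h_i^*}'')\bigr)$, and by cyclicity of the trace this becomes $\sum_i\tr\bigl(\zeta(S^*{h_i^*}'')\zeta({h_i^*}')\Phi(a)\bigr)=\tr\bigl(\zeta(\sum_i (S^*{h_i^*}'')\ast{h_i^*}')\Phi(a)\bigr)$, where $\ast$ is the convolution product in $H^*$ coming from $\Delta$.

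The point of the whole calculation is then to identify $\sum_i (S^*{h_i^*}'')\ast{h_i^*}'$ as the scalar $h^*(1)$ times the counit functional $\varepsilon$ (which acts on $A$ as $a\mapsto\varepsilon(a_{(1)})a_{(0)}=a$, i.e. $\zeta(\varepsilon)=\id_A$). This is exactly where the substitute comultiplication of Lemma~\ref{LemmaSubstituteComult} and the involutory hypothesis $S^2=\id_H$ enter: evaluating $\bigl((S^*{h_i^*}'')\ast{h_i^*}'\bigr)(k)=\sum_i{h_i^*}''(Sk_{(1)})\,{h_i^*}'(k_{(2)})=h^*\bigl((Sk_{(1)})k_{(2)}\bigr)=h^*(\varepsilon(k)1)=h^*(1)\varepsilon(k)$, using the antipode axiom $(Sk_{(1)})k_{(2)}=\varepsilon(k)1$. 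Hence $\zeta(\sum_i (S^*{h_i^*}'')\ast{h_i^*}')=h^*(1)\id_A$, and $\tr(\Phi(h^*a))=h^*(1)\tr(\Phi(a))$ as claimed.

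The main obstacle I anticipate is bookkeeping the order of factors and the correct appearance of $S$ versus $S^{-1}$ when passing between $\Phi(h^*a)$ and the convolution expression, since the comodule structure on $\End_F(A)$ twists by $Sa_{(1)}$ and the formula for $h^*\psi$ already carries an $S^*$; getting these to combine into the clean antipode identity $(Sk_{(1)})k_{(2)}=\varepsilon(k)1$ rather than its mirror image requires care, and is precisely the place where $S^2=\id_H$ is needed to reconcile the two conventions. Everything else is cyclicity of the trace together with the identification $\zeta(\varepsilon)=\id_A$.
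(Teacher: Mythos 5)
Your argument is essentially the paper's proof: you use that $\Phi$ is a morphism of $H$-comodules (hence of $H^*$-modules), the formula $h^*\psi=\sum_i\zeta({h_i^*}')\psi\,\zeta(S^*{h_i^*}'')$, cyclicity of the trace, and multiplicativity of $\zeta$, reducing everything to showing that $\sum_i(S^*{h_i^*}'')\ast{h_i^*}'$ acts on $A$ as $h^*(1)\id_A$. One correction to your displayed computation: Lemma~\ref{LemmaSubstituteComult} has the convention $h^*(hq)=\sum_i{h_i^*}'(h){h_i^*}''(q)$, so $\sum_i{h_i^*}''(Sk_{(1)}){h_i^*}'(k_{(2)})$ equals $h^*(k_{(2)}Sk_{(1)})$, not $h^*\bigl((Sk_{(1)})k_{(2)}\bigr)$, and the plain antipode axiom therefore does not apply directly; what is needed is the mirror identity $k_{(2)}Sk_{(1)}=\varepsilon(k)1$, which holds precisely because $S^2=\id_H$ (apply $S$ to $k_{(1)}Sk_{(2)}=\varepsilon(k)1$). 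You anticipated exactly this in your final paragraph, and it is exactly the step where the paper uses the involutory hypothesis (its last line reads $h^*(b_{(2)}Sb_{(1)})b_{(0)}=h^*(1)b$), so your proof is correct once that bookkeeping is carried out.
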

 \begin{proof} Note that $\Phi$ is a homomorphism of $H$-comodules.
 Therefore, $\Phi$ is a homomorphism of $H^*$-modules.
 Moreover,
 $$\tr(\Phi(h^* a))= \sum_i \tr\zeta({h^*_i}') \Phi(a) \zeta(S^*{h^*_i}'')
 = \sum_i \tr(\zeta((S^*{h^*_i}''){h^*_i}') \Phi(a)).$$
 
Note that  $$\sum_i \zeta((S^*{h^*_i}''){h^*_i}') b = 
\sum_i(S^*{h^*_i}'')({h^*_i}'(b_{(1)})b_{(0)})=$$ $$
\sum_i({h^*_i}'')(Sb_{(1)}){h^*_i}'(b_{(2)})b_{(0)}
=h^*(b_{(2)} Sb_{(1)}) b_{(0)} =h^*(1)b$$ for all $h^*\in H^*$
and $b\in A$.
Hence $\tr(\Phi(h^* a)) = h^*(1) \tr(\Phi(a))$.
  \end{proof}

\begin{theorem}\label{TheoremRadicalHSubComod}
Let $A$ be a finite dimensional associative $H$-comodule algebra over a field $F$
for some Hopf algebra $H$ with $S^2=\id_H$.
Suppose that either $\ch F = 0$ or $\ch F>\dim A$. Then the Jacobson radical $J:=J(A)$ is an $H$-subcomodule of $A$.
\end{theorem}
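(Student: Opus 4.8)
The plan is to prove that $J$ is stable under the $H^{*}$-action, which for a finite dimensional comodule is equivalent to being an $H$-subcomodule. Indeed, for $a\in A$ write $\rho(a)=\sum_{j}a_{j}\otimes e_{j}$ with the $e_{j}$ linearly independent in $H_{1}$; if $H^{*}a\subseteq J$ for every $a\in J$, then applying elements of $H^{*}$ dual to the $e_{j}$ (which exist since $H_{1}$ is finite dimensional) shows that each coefficient $a_{j}$ of an element of $J$ again lies in $J$, so $\rho(J)\subseteq J\otimes H$. Thus it suffices to show $H^{*}J\subseteq J$.

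To get hold of $J$ I would use the trace form $T(a,b):=\tr(\Phi(ab))$ of the left regular representation $\Phi$ and set $N:=\lbrace a\in A: T(a,b)=0 \text{ for all } b\in A\rbrace$. Using associativity and the symmetry of the trace one checks that $N$ is a two-sided ideal. The key claim is $N=J$. The inclusion $J\subseteq N$ is immediate: $J$ is nilpotent, so for $a\in J$ and any $b$ the element $ab$ is nilpotent, whence $\Phi(ab)$ is a nilpotent operator and $T(a,b)=0$.

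The reverse inclusion $N\subseteq J$ is where the hypothesis on $\ch F$ enters, and I expect it to be the main obstacle. Suppose $N\not\subseteq J$. Since $J\subseteq N$ and $N$ is an ideal, the image of $N$ in the semisimple algebra $A/J$ is a nonzero ideal and hence contains a nonzero idempotent; lifting it through the nilpotent ideal $J$ produces a nonzero idempotent $e\in N$. Then $\Phi(e)$ is an idempotent operator, so $T(e,e)=\tr(\Phi(e))=(\dim eA)\cdot 1_{F}$. But $1\leqslant \dim eA\leqslant \dim A$, so the assumption $\ch F=0$ or $\ch F>\dim A$ forces $(\dim eA)\cdot 1_{F}\neq 0$, contradicting $e\in N$. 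Hence $N=J$, and it remains only to prove that $N$ is $H^{*}$-stable.

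For this, fix $a\in N$, $h^{*}\in H^{*}$, and $b\in A$. By~(\ref{EqMoveHFirst}) we have $(h^{*}a)b=\sum_{i}{h^{*}_i}'\bigl(a\,((S^{*}{h^{*}_i}'')b)\bigr)$, so applying $\tr\circ\Phi$ and then Lemma~\ref{LemmaTraceHRad} to each summand gives $T(h^{*}a,b)=\sum_{i}{h^{*}_i}'(1)\,\tr\bigl(\Phi(a\,((S^{*}{h^{*}_i}'')b))\bigr)$. Every trace on the right vanishes because $a\in N$, so $T(h^{*}a,b)=0$ for all $b$, i.e. $h^{*}a\in N=J$. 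Combined with the reduction of the first paragraph, this shows that $J$ is an $H$-subcomodule of $A$.
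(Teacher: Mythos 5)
Your proposal is correct, and while it shares the paper's two key technical tools---identity~(\ref{EqMoveHFirst}) and the trace identity of Lemma~\ref{LemmaTraceHRad}---the way you close the argument is genuinely different. The paper works with $J_0 := H^*J$, which contains $J$, is an $H$-subcomodule, and is an ideal by Lemma~\ref{LemmaHComoduleIdeal}; it then shows, by applying (\ref{EqMoveHFirst}) and Lemma~\ref{LemmaTraceHRad} to products $(h_1^*a_1)\cdots(h_m^*a_m)$ with $a_i \in J$, that $\tr\left(\Phi(a)^k\right)=0$ for all $a \in J_0$ and all $k$, so the characteristic hypothesis enters via the Newton-identities criterion that vanishing of all power traces forces nilpotency; thus $J_0$ is nil, hence $J_0 \subseteq J$ and $J_0 = J$. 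You instead characterize $J$ as the radical $N$ of the trace form $T(a,b)=\tr(\Phi(ab))$ (Dickson's theorem), so the characteristic hypothesis enters through $\tr(\Phi(e))=(\dim eA)\cdot 1_F \neq 0$ for a lifted idempotent $e$, and afterwards a \emph{single} application of (\ref{EqMoveHFirst}) plus Lemma~\ref{LemmaTraceHRad} gives $H^*N \subseteq N$, with no need to handle products of several elements $h_i^*a_i$ or to invoke the power-trace criterion. The trade-off: your route requires idempotent lifting modulo the nilpotent ideal $J$ and the ideal structure of finite dimensional semisimple algebras, which the paper's route avoids; conversely, you avoid the induction over products and the Newton-identity argument. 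Your opening reduction (for a finite dimensional comodule, an $H^*$-stable subspace is a subcomodule) is exactly the fact the paper uses tacitly when asserting that $H^*J$ is an $H$-subcomodule, and the details you leave implicit do check out: $N$ is an ideal by symmetry and associativity of $T$, $e=e^2 \in eA$ gives $\dim eA \geqslant 1$, and idempotents lift modulo a nilpotent ideal even if $A$ is not assumed unital.
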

\begin{corollary}
Let $A$ be a finite dimensional associative algebra over a field $F$ graded by any group $G$.
Suppose that either $\ch F = 0$ or $\ch F > \dim A$. Then the Jacobson radical $J:=J(A)$ is a graded ideal of $A$.
\end{corollary}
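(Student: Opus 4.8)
The plan is to prove Theorem~\ref{TheoremRadicalHSubComod} by showing that $J$ is stable under the $H^*$-action, i.e.\ $H^*J\subseteq J$, and then invoking the fact that for a finite dimensional comodule a subspace is an $H$-subcomodule exactly when it is an $H^*$-submodule. (Concretely, writing $\rho(a)=\sum_k a_k\otimes h_k$ with the $h_k$ linearly independent and separating coordinates by suitable functionals, $H^*$-stability of a subspace $W$ forces every $a_k\in W$, so $\rho(W)\subseteq W\otimes H$; this works even for infinite dimensional $H$ because $A$ is finite dimensional.) Note that $J\subseteq H^*J$ already, since $\varepsilon J=J$, and $H^*J$ is an ideal by Lemma~\ref{LemmaHComoduleIdeal}; so only the inclusion $H^*J\subseteq J$ is at stake.

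The key input I would use is the trace-form characterization of the radical. Let $B(a,b)=\tr(\Phi(ab))$ be the symmetric bilinear form attached to the left regular representation $\Phi$. Because $J$ is nilpotent, $\Phi(ab)$ is a nilpotent operator whenever $a\in J$, so $B(J,A)=0$ and $J$ lies in the radical of the form $B$. For the reverse inclusion one checks that the radical of $B$ is an ideal and reduces the statement to the assertion that a nonzero semisimple algebra has nondegenerate regular trace form. Writing $A/J\cong\bigoplus_i M_{n_i}(D_i)$, this asks each $D_i$ to be separable over $F$ and each $n_i$ (and $\dim_F D_i$) to be invertible in $F$. Both hold when $\ch F=0$ or $\ch F>\dim A$: any inseparable extension sitting inside $A$ would contribute a purely inseparable subextension of degree at least $\ch F$, hence at most $\dim A$, which is excluded, and likewise $n_i\leqslant\dim A<\ch F$. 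This yields the characterization $a\in J$ if and only if $\tr(\Phi(ab))=0$ for all $b\in A$. I expect this step --- isolating exactly how the bound on $\ch F$ forces separability and keeps the matrix-size factors nonzero --- to be the main obstacle.

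Granting the characterization, the theorem follows by a short computation tying together the preparatory lemmas. Fix $j\in J$, $h^*\in H^*$ and an arbitrary $b\in A$. By~\eqref{EqMoveHFirst}, with ${h^*_i}',{h^*_i}''$ supplied by Lemma~\ref{LemmaSubstituteComult}, one has $(h^*j)b=\sum_i{h^*_i}'\bigl(j\,c_i\bigr)$ where $c_i=(S^*{h^*_i}'')b\in A$ and $j c_i\in J$ since $J$ is an ideal. Applying Lemma~\ref{LemmaTraceHRad} term by term gives $\tr\bigl(\Phi({h^*_i}'(j c_i))\bigr)={h^*_i}'(1)\,\tr(\Phi(j c_i))$, and $\tr(\Phi(j c_i))=0$ because $j c_i\in J$ makes $\Phi(j c_i)$ nilpotent. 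Hence $\tr(\Phi((h^*j)b))=0$ for every $b$, so $h^*j\in J$ by the characterization, proving $H^*J\subseteq J$ and therefore that $J$ is an $H$-subcomodule. The Corollary is then immediate: a $G$-graded algebra is an $FG$-comodule algebra as in the Example, $FG$ is involutory because its antipode $g\mapsto g^{-1}$ satisfies $S^2=\id$, and an $FG$-subcomodule of $\bigoplus_g A^{(g)}$ is precisely a graded subspace, so the ideal $J$ is graded.
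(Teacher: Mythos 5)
Your proposal is correct, but it closes the argument by a different mechanism than the paper, so a comparison is in order. Both arguments rest on the same two computations --- identity (\ref{EqMoveHFirst}), which moves the $H^*$-action to the outside of a product, and the trace identity of Lemma~\ref{LemmaTraceHRad} --- and both deduce the Corollary from Theorem~\ref{TheoremRadicalHSubComod} in the same way ($G$-graded algebras are $FG$-comodule algebras, $S(g)=g^{-1}$ gives $S^2=\id$, and $FG$-subcomodules are exactly graded subspaces). The difference is how trace conditions are converted into membership in $J$. The paper never uses the nondegeneracy (Dickson) characterization $J=\{a \in A : \tr(\Phi(ab))=0 \text{ for all } b\in A\}$: it shows that every product $(h_1^*a_1)\cdots(h_m^*a_m)$ with $a_i\in J$ has trace zero, hence every element of $J_0:=H^*J$ has all of its powers of trace zero, hence is nilpotent by the characteristic hypothesis; so $J_0$ is a nil ideal, forcing $J_0\subseteq J$ and thus $J_0=J$. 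Your route instead proves $H^*J\subseteq J$ elementwise, which makes the $H^*$-computation lighter (one application of (\ref{EqMoveHFirst}) with an arbitrary right factor $b$, rather than $m$-fold products of radical elements), but shifts all the weight onto the characterization of $J$ --- the step you rightly flag as the main obstacle. Two remarks on that step. First, the cheapest proof of the characterization is precisely the paper's mechanism: the radical of the form $B$ is an ideal (by cyclicity of the trace), and $\tr(\Phi(a^k))=0$ for all $k$ forces $\Phi(a)$, and hence $a$, to be nilpotent, so $\mathrm{rad}(B)$ is a nil ideal contained in $J$; done this way, your proof essentially contains the paper's proof as a subroutine. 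Second, your Wedderburn-style sketch has a small imprecision: the form that $B$ induces on $A/J$ is not the regular trace form of $A/J$ but the weighted sum $\sum_i m_i \tr_{S_i}$, where $m_i$ is the multiplicity of the simple $A/J$-module $S_i$ as a composition factor of the regular module $A$; so besides separability of the $D_i$ and invertibility of the $n_i$ you also need $m_i\neq 0$ in $F$. Fortunately this follows from the same bound ($m_i \dim S_i \leqslant \dim A < \ch F$ in positive characteristic, and no condition at all in characteristic $0$), so with that patch your argument is complete.
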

Combining this with~\cite[Corollary~2.8]{SteVanOyst}, we get
the graded Wedderburn~--- Mal'cev theorem:
\begin{corollary}
Let $A$ be a finite dimensional associative algebra over a field $F$ graded by any group $G$.
Suppose that either $\ch F = 0$ or $\ch F > \dim A$ and $A/J(A)$ is separable. Then there
exists a maximal semisimple subalgebra $B\subseteq A$
such that $A=B\oplus J(A)$ (direct sum of graded subspaces).
\end{corollary}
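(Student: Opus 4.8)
The plan is to deduce this corollary from two inputs: the co-stability of the Jacobson radical (the immediately preceding corollary, which gives that $J(A)$ is a graded ideal) and the cited result \cite[Corollary~2.8]{SteVanOyst}, which I expect to be a graded (or $H$-comodule) version of the Wedderburn--Mal'cev splitting. The strategy is therefore not to prove the splitting from scratch, but to verify that the hypotheses of that cited result are met once we know $J(A)$ is graded. First I would observe that under the present characteristic assumption the quotient $A/J(A)$ is a finite dimensional semisimple algebra, and by assumption it is separable; crucially, since $J(A)$ is a graded ideal, the quotient $A/J(A)$ inherits a $G$-grading, making it a graded semisimple algebra and the canonical projection $\pi\colon A\to A/J(A)$ a homomorphism of graded algebras.

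Next I would set up the splitting problem in the graded category. The ordinary Wedderburn--Mal'cev theorem asserts that when $A/J(A)$ is separable there is a semisimple subalgebra mapping isomorphically onto $A/J(A)$, and that any two such are conjugate by an inner automorphism $1+j$ with $j\in J(A)$. The graded refinement \cite[Corollary~2.8]{SteVanOyst} upgrades this to produce a \emph{graded} semisimple subalgebra $B$ with $A=B\oplus J(A)$ as graded subspaces. The heart of the matter is that all the relevant data are now compatible with the grading: $J(A)$ is graded, $A/J(A)$ is graded and separable, and $\pi$ respects the grading. Feeding these into the cited corollary yields a graded subalgebra $B$ with $\pi|_B$ an isomorphism onto $A/J(A)$, so $A=B\oplus\ker\pi=B\oplus J(A)$ as a direct sum of graded subspaces, and $B$ is semisimple since it is isomorphic to $A/J(A)$. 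Maximality of $B$ among semisimple subalgebras follows because any semisimple subalgebra intersects $J(A)$ trivially and hence has dimension at most $\dim(A/J(A))=\dim B$.

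The main obstacle, and the step requiring the most care, is the translation of the $H$-comodule language of Theorem~\ref{TheoremRadicalHSubComod} into the purely graded setting in which \cite[Corollary~2.8]{SteVanOyst} is presumably stated. Concretely, I must confirm that \textquotedblleft graded ideal\textquotedblright\ in the sense coming from the $FG$-comodule structure (via the Example identifying group gradings with $FG$-comodule algebras) coincides with the notion of graded ideal used in the Steenpass--Van Oystaeyen reference, and that their separability and splitting hypotheses are exactly the ones available here. Once this dictionary is fixed, the proof is essentially an application of the cited theorem; no genuinely new computation is needed, only the verification that $J(A)$ being graded is precisely the extra hypothesis that promotes the classical Wedderburn--Mal'cev splitting to a graded one.
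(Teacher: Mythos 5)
Your proposal matches the paper's proof exactly: the paper likewise obtains this corollary by combining the preceding corollary (that $J(A)$ is a graded ideal, via Theorem~\ref{TheoremRadicalHSubComod} and the $FG$-comodule interpretation of the grading) with the comodule-algebra Wedderburn---Mal'cev theorem of \c Stefan and Van Oystaeyen \cite[Corollary~2.8]{SteVanOyst}, with no further computation. Only note that the cited authors are \c Stefan and Van Oystaeyen (not ``Steenpass''), and their result is indeed stated for comodule algebras, so the dictionary you flag is exactly the one the paper uses.
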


\begin{proof}[Proof of Theorem~\ref{TheoremRadicalHSubComod}.]
 Note that $J_0 := H^* J \supseteq J$ is an $H$-subcomodule. By Lemma~\ref{LemmaHComoduleIdeal},
 $J_0$ is an ideal. Therefore,
 it is sufficient to prove that  $J_0$ is a nil-ideal.
 
 Let $a_1, \ldots, a_m \in J$, $h^*_1, \ldots, h^*_m \in H^*$.
 Note that $J$ is nilpotent and $\tr(\Phi(a))=0$ for all $a \in J$. By~(\ref{EqMoveHFirst})
 and Lemma~\ref{LemmaTraceHRad}, $$\tr\left(\Phi((h^*_1 a_1)\ldots (h^*_m a_m))\right)
 = \sum_i\tr\left(\Phi\left({h^*_{1i}}'' (a_1 (S^*{h^*_{1i}}'')((h^*_2 a_2)\ldots (h^*_m a_m)))\right)\right)
 =$$ $$ \sum_i {h^*_{1i}}''(1)\tr\left(\Phi\left(a_1 (S^*{h^*_{1i}}'')((h^*_2 a_2)\ldots (h^*_m a_m))\right)\right)=0$$
 since $a_1 (S^*{h^*_{1i}}'')((h^*_2 a_2)\ldots (h^*_m a_m)) \in J$. 
 
 In particular, $\tr(\Phi(a)^k)=0$ for all $a\in J_0$ and $k\in\mathbb N$. Since either $\ch F = 0$
 or $\ch F > \dim A$, $\Phi(a)$ is a nilpotent operator on $A$ and $J_0$ is a nil-ideal. Hence $J = J_0$.
\end{proof}

\section{Co-stability of radicals in Lie algebras}\label{SectionLieRadical}

Analogous results hold for Lie algebras:

\begin{theorem}
\label{TheoremLieRadicalHSubComod}
Let $L$ be a finite dimensional $H$-comodule Lie algebra
over a field $F$ of characteristic $0$ for some Hopf algebra $H$ with $S^2=\id_H$.
Then the solvable radical $R$ and the nilpotent radical $N$ of $L$ are $H$-subcomodules.
\end{theorem}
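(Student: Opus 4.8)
The plan is to handle both radicals by the mechanism already used in the associative case: describe each radical element-wise in terms of trace forms that are invariant under the comodule structure, and then transfer the defining condition along the $H^*$-action. I work in characteristic $0$ and use the two classical descriptions valid there: the solvable radical is the orthogonal complement $R=([L,L])^{\perp}$ of the commutator ideal with respect to the Killing form $\kappa(a,b)=\tr(\ad a\,\ad b)$ (an immediate consequence of Cartan's solvability criterion), while the nilpotent radical is $N=\{x\in R\mid \ad_L x \text{ is nilpotent}\}$. Both are field-independent statements, so no passage to the algebraic closure is needed.

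First I would record the invariance input. Exactly as the excerpt observes for $\Phi$, the map $\ad\colon L\to\End_F(L)$ is a homomorphism of $H$-comodules, so $(\ad b)_{(0)}\otimes(\ad b)_{(1)}=\ad(b_{(0)})\otimes b_{(1)}$; this is a one-line antipode computation using $a_{(1)}Sa_{(2)}=\varepsilon(a_{(1)})1$. Moreover, since the trace is the canonical contraction $L^{*}\otimes L\to F$ for the dual comodule on $L^{*}$, the map $\tr\colon\End_F(L)\to F$ is a homomorphism of $H$-comodules to the trivial comodule; equivalently $\tr(\psi_{(0)})\otimes\psi_{(1)}=\tr(\psi)\otimes 1$. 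Composing $\ad^{\otimes k}$ with the comodule multiplication of $\End_F(L)$ and with $\tr$, each $k$-linear form $\tau_k(x_1,\dots,x_k)=\tr(\ad x_1\cdots\ad x_k)$ is a homomorphism of $H$-comodules $L^{\otimes k}\to F$; in particular $\kappa=\tau_2$ is such a form. The elementary lemma I would then isolate is a \emph{transfer lemma}: if $\beta\colon V\otimes W\to F$ is a homomorphism of $H$-comodules to the trivial comodule, then $\beta(h^{*}v,w)=\beta(v,(S^{*}h^{*})w)$ for all $h^{*}\in H^{*}$, where $(S^{*}h^{*})(u)=h^{*}(Su)$. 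This follows by writing out both sides and inserting $w_{(1)}Sw_{(2)}=\varepsilon(w_{(1)})1$, i.e. it is the same antipode bookkeeping as in Lemma~\ref{LemmaHComoduleIdeal}.

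For the solvable radical I would argue directly. The commutator $[L,L]$ is an $H$-subcomodule because $\rho([a,b])=[a_{(0)},b_{(0)}]\otimes a_{(1)}b_{(1)}$, hence $H^{*}[L,L]\subseteq[L,L]$. Given $x\in([L,L])^{\perp}=R$, $h^{*}\in H^{*}$ and $w\in[L,L]$, the transfer lemma applied to $\beta=\kappa$ yields $\kappa(h^{*}x,w)=\kappa(x,(S^{*}h^{*})w)=0$, since $(S^{*}h^{*})w\in H^{*}[L,L]\subseteq[L,L]$ and $x\perp[L,L]$. Thus $h^{*}x\in([L,L])^{\perp}=R$, so $H^{*}R\subseteq R$; as $L$ is finite dimensional this is exactly the statement that $R$ is an $H$-subcomodule.

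For the nilpotent radical the key reduction is that $N$ is a subspace, so it suffices to prove $h^{*}x\in N$ for every $x\in N$ and $h^{*}\in H^{*}$; summing then gives $H^{*}N\subseteq N$. Since $R$ is already an $H$-subcomodule, $h^{*}x\in H^{*}R\subseteq R$, so by the description of $N$ it remains to check that $\ad_L(h^{*}x)$ is nilpotent, i.e. (characteristic $0$) that $\tr\bigl((\ad_L(h^{*}x))^{k}\bigr)=\tau_k(c,\dots,c)=0$ for all $k$, where $c:=h^{*}x$. Regarding $\tau_k$ as a comodule map $\beta(v,w_2\otimes\cdots\otimes w_k):=\tau_k(v,w_2,\dots,w_k)$ and applying the transfer lemma in the first variable gives $\tau_k(c,\dots,c)=\beta(c,c^{\otimes(k-1)})=\beta(x,(S^{*}h^{*})(c^{\otimes(k-1)}))$, a scalar multiple of terms $\tr(\ad x\,\ad y_2\cdots\ad y_k)$ with $x\in N$ and $y_i\in H^{*}R\subseteq R\subseteq L$. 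Finally, each lower central term $N^{(j)}$ of the nilpotent ideal $N$ is an ideal of $L$, so every $\ad y$ with $y\in L$ preserves the flag $L=N^{(0)}\supseteq N=N^{(1)}\supseteq N^{(2)}\supseteq\cdots$, while $\ad x$ with $x\in N$ strictly lowers it; hence $\ad x\,\ad y_2\cdots\ad y_k$ is strictly flag-lowering and has trace $0$. Therefore $\tau_k(c,\dots,c)=0$ for all $k$, $\ad_L(h^{*}x)$ is nilpotent, and $h^{*}x\in N$. The step I expect to be the main obstacle is precisely the verification that $\tr$ (and hence each $\tau_k$) is a comodule map, because the coaction on $\End_F(L)$ is twisted by the antipode; once this ``invariance of the trace'' is established, both radicals drop out of their element-wise descriptions together with the flag-triangularity estimate above.
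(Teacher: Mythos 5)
Your proposal is correct, but it takes a genuinely different route from the paper's. The paper deduces the Lie statement from the associative one: it passes to the associative subalgebra $A\subseteq\End_F(L)$ generated by $\ad L$ (noting, as you do, that $\ad$ is a homomorphism of $H$-comodules and $H^*$-modules), applies Theorem~\ref{TheoremRadicalHSubComod} together with \cite[Lemma~1]{ASGordienko4} to get $\ad(H^*N)\subseteq H^*J(A)=J(A)$, so that $H^*N$ is a nilpotent ideal and $H^*N=N$; only then does it treat $R$, using $[L,R]\subseteq N$ from \cite[Proposition~2.1.7]{GotoGrosshans} and the identity~(\ref{EqMoveHFirst}) to conclude $[H^*R,H^*R]\subseteq H^*N=N$, whence $H^*R$ is solvable and equals $R$. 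You reverse the order ($R$ first, then $N$), avoid Theorem~\ref{TheoremRadicalHSubComod} and the external lemma of \cite{ASGordienko4} altogether, and instead use the classical characteristic-zero descriptions $R=[L,L]^{\perp}$ (Cartan) and $N=\lbrace x\in R \mid \ad_L x \text{ nilpotent}\rbrace$, combined with your transfer lemma $\beta(h^*v,w)=\beta(v,(S^*h^*)w)$ for invariant forms, which is precisely the abstract content of~(\ref{EqMoveHFirst}) and of Lemma~\ref{LemmaTraceHRad}; your flag argument for $\tr(\ad x\,\ad y_2\cdots\ad y_k)=0$ then replaces the appeal to $J(A)$. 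What the paper's route buys is brevity and uniformity, since the hard work is already done in the associative theorem; what yours buys is independence from that theorem and a reusable statement about invariant bilinear forms. Two details you should nail down explicitly, though neither is a gap in the approach. First, your assertion that $\tr\colon\End_F(L)\to F$ is a comodule map is exactly where the hypothesis $S^2=\id_H$ enters: one needs $h_{(2)}Sh_{(1)}=\varepsilon(h)1$ on the relevant coefficient subcoalgebra, i.e.\ $S=S^{-1}$ there, and this fails for general antipodes; the verification is the same computation as in the proof of Lemma~\ref{LemmaTraceHRad}, so it should be carried out rather than only flagged as ``the main obstacle.'' Second, the nilpotency criterion requires $\tr\bigl((\ad c)^k\bigr)=0$ for all $k\geqslant 1$, while your bilinear transfer only makes sense for $k\geqslant 2$; for $k=1$ use the unary invariance $\tr(\ad(h^*x))=h^*(1)\tr(\ad x)=0$, which follows from $\tau_1$ being a comodule map.
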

\begin{proof}
Consider the adjoint representation $\ad \colon L \to \mathfrak{gl}(L)$
where $(\ad a)b:=[a,b]$.
Then $\ad$ is a homomorphism of $H$-comodules and $H^*$-modules.
Denote by $A$ the associative subalgebra of $\End_F(L)$ generated by $(\ad L)$.
Applying Lemma~\ref{LemmaSubstituteComult} to $\End_F(L)$, we obtain that $A$ is an $H^*$-submodule. Therefore, $A$ is an $H$-subcomodule.
By Lemma~\ref{LemmaHComoduleIdeal}, $H^*N$ and $H^*R$ are ideals of $L$.
Theorem~\ref{TheoremRadicalHSubComod} and \cite[Lemma~1]{ASGordienko4} imply $(\ad (H^*N)) \subseteq H^*J(A)=J(A)$. Thus $H^*N$ is nilpotent and $H^*N = N$.

By \cite[Proposition 2.1.7]{GotoGrosshans},
 $[L, R] \subseteq N$. Together with~(\ref{EqMoveHFirst})
 this implies $$[H^*R, H^*R] \subseteq [H^*R, L] \subseteq H^* [R, H^*L] \subseteq H^*[R,L] \subseteq H^*N = N.$$
 Thus $H^*R$ is solvable and $H^*R = R$.
\end{proof}

As an immediate consequence, we get D.~Pagon, D.~Repov\v s, and M.V.~Zaicev's result:

\begin{corollary}
Let $L$ be a finite dimensional Lie algebra
over a field $F$ of characteristic $0$, graded by an arbitrary group.
Then the solvable radical $R$ and the nilpotent radical $N$ of $L$ are graded ideals.
\end{corollary}

Combining this with~\cite[Theorem~4]{ASGordienko4}, we obtain the graded Levi theorem:
\begin{corollary}
Let $L$ be a finite dimensional Lie algebra over a field $F$ of characteristic $0$,
graded by an arbitrary group $G$.
Then there exists a maximal semisimple subalgebra $B$ in $L$ such that
$L=B\oplus R$ (direct sum of graded subspaces).
\end{corollary}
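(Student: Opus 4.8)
The plan is to recognize the $G$-grading as an $FG$-comodule structure and then reduce the statement to an equivariant (comodule) version of Levi's theorem, feeding in the fact — now available from the preceding corollary — that the radicals are graded. Concretely, as in the Example of Section~2, a $G$-grading $L=\bigoplus_{g\in G}L^{(g)}$ is the same as a structure of $FG$-comodule Lie algebra via $\rho(a^{(g)})=a^{(g)}\otimes g$, and $FG$ is an involutory Hopf algebra since its antipode $S(g)=g^{-1}$ satisfies $S^2=\id$. By the corollary preceding this one, the solvable radical $R$ is a graded ideal, so $\mathfrak s:=L/R$ is a finite dimensional $G$-graded semisimple Lie algebra and the projection $\pi\colon L\to\mathfrak s$ is a homomorphism of $FG$-comodules. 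A Levi subalgebra $B$ with $L=B\oplus R$ a direct sum of graded subspaces is the same thing as a graded (i.e.\ $FG$-coinvariant) Lie-algebra section $\sigma\colon\mathfrak s\to L$ of $\pi$: then $B=\sigma(\mathfrak s)$ is semisimple, hence a maximal semisimple subalgebra, and it is a subcomodule because $\sigma$ is a comodule morphism. So the whole task is to produce such a graded section.

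First I would fix, by the classical Levi--Mal'cev theorem, one (a priori non-graded) Levi subalgebra $\mathfrak s_0$, so that $L=\mathfrak s_0\oplus R$ as Lie algebras; every other Levi complement is obtained from $\mathfrak s_0$ by conjugation with an element of the unipotent group $\exp(\ad N)$, where $N$ is the nilpotent radical (itself graded by the same corollary). Equivalently, I would build $\sigma$ by the standard obstruction calculus: reduce to the case in which $R$ is abelian by running down the derived series of $R$ — each quotient is a graded abelian ideal and a graded $\mathfrak s$-module — and, starting from a graded linear section $\sigma_0$ (which exists because $\pi$ is a surjection of $FG$-comodules and the comodule category is semisimple), measure the failure of $\sigma_0$ to be a Lie homomorphism by a cocycle and correct it. In the non-equivariant setting the correction is available because $H^{1}(\mathfrak s,M)=H^{2}(\mathfrak s,M)=0$ for finite dimensional modules in characteristic $0$ (Whitehead's lemmas, via the Casimir operator).

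The hard part is to make every step of this correction respect the $FG$-comodule structure for a group $G$ that may be infinite and, crucially, \emph{non-abelian}. For non-abelian $G$ one cannot simply run graded Chevalley--Eilenberg cohomology: the category of $G$-graded vector spaces is monoidal but not symmetric, the flip $V\otimes W\to W\otimes V$ sending degree $ab$ to degree $ba$ and so failing to preserve degrees, whence the naive graded differential is not homogeneous; and there is no averaging over an infinite group. This is exactly the obstacle that the comodule machinery of the present paper is designed to bypass: using the antipode of $FG$ one still equips $\End_F(L)$ and the relevant $\Hom$-spaces with genuine $FG$-comodule structures, and the substitute for a comultiplication in $H^*$ (Lemma~\ref{LemmaSubstituteComult}), together with the trace/integral arguments behind Theorem~\ref{TheoremRadicalHSubComod}, let one choose the conjugating element of $\exp(\ad N)$ coinvariantly. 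Rather than redo this analysis, I would package it as the $H$-comodule Levi theorem \cite[Theorem~4]{ASGordienko4}: once $R$ (equivalently $N$) is known to be a graded ideal — the content supplied by the previous corollary — that theorem yields an $FG$-coinvariant Levi subalgebra $B$, which is precisely the required graded complement $L=B\oplus R$.
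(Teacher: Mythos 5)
Your proposal is correct and, once the exploratory detour through Levi--Mal'cev conjugation, cocycle corrections, and Whitehead's lemmas is set aside, it lands on precisely the paper's own argument: the preceding corollary (via the $FG$-comodule interpretation of the grading) makes the solvable radical $R$ a graded ideal, and then \cite[Theorem~4]{ASGordienko4} supplies an $FG$-coinvariant, i.e.\ graded, Levi subalgebra $B$ with $L=B\oplus R$. Since the obstruction-theoretic discussion is offered only as motivation and the final step is the same two ingredients in the same order as in the paper, there is nothing to correct.
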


\section{Graded polynomial identities and their codimensions}\label{SectionGraded}

Let $G$ be a group and let $F$ be a field. Denote by $F\langle X^{\mathrm{gr}} \rangle $ the free $G$-graded associative  algebra over $F$ on the countable set $$X^{\mathrm{gr}}:=\bigcup_{g \in G}X^{(g)},$$ $X^{(g)} = \{ x^{(g)}_1,
x^{(g)}_2, \ldots \}$,  i.e. the algebra of polynomials
 in non-commuting variables from $X^{\mathrm{gr}}$.
  The indeterminates from $X^{(g)}$ are said to be homogeneous of degree
$g$. The $G$-degree of a monomial $x^{(g_1)}_{i_1} \dots x^{(g_t)}_{i_t} \in F\langle
 X^{\mathrm{gr}} \rangle $ is defined to
be $g_1 g_2 \dots g_t$, as opposed to its total degree, which is defined to be $t$. Denote by
$F\langle
 X^{\mathrm{gr}} \rangle^{(g)}$ the subspace of the algebra $F\langle
 X^{\mathrm{gr}} \rangle$ spanned
 by all the monomials having
$G$-degree $g$. Notice that $$F\langle
 X^{\mathrm{gr}} \rangle^{(g)} F\langle
 X^{\mathrm{gr}} \rangle^{(h)} \subseteq F\langle
 X^{\mathrm{gr}} \rangle^{(gh)},$$ for every $g, h \in G$. It follows that
$$F\langle
 X^{\mathrm{gr}} \rangle =\bigoplus_{g\in G} F\langle
 X^{\mathrm{gr}} \rangle^{(g)}$$ is a $G$-grading.
  Let $f=f(x^{(g_1)}_{i_1}, \dots, x^{(g_t)}_{i_t}) \in F\langle
 X^{\mathrm{gr}} \rangle$.
We say that $f$ is
a \textit{graded polynomial identity} of
 a $G$-graded algebra $A=\bigoplus_{g\in G}
A^{(g)}$
and write $f\equiv 0$
if $f(a^{(g_1)}_{i_1}, \dots, a^{(g_t)}_{i_t})=0$
for all $a^{(g_j)}_{i_j} \in A^{(g_j)}$, $1 \leqslant j \leqslant t$.
  The set $\Id^{\mathrm{gr}}(A)$ of graded polynomial identities of
   $A$ is
a graded ideal of $F\langle
 X^{\mathrm{gr}} \rangle$.
The case of ordinary polynomial identities is included
for the trivial group $G=\lbrace e \rbrace$.

\begin{example}\label{ExampleIdGr}
 Let $G=\mathbb Z_2 = \lbrace \bar 0, \bar 1 \rbrace$,
$M_2(F)=M_2(F)^{(\bar 0)}\oplus M_2(F)^{(\bar 1)}$
where $M_2(F)^{(\bar 0)}=\left(
\begin{array}{cc}
F & 0 \\
0 & F
\end{array}
 \right)$ and $M_2(F)^{(\bar 1)}=\left(
\begin{array}{cc}
0 & F \\
F & 0
\end{array}
 \right)$. Then  $x^{(\bar 0)} y^{(\bar 0)} - y^{(\bar 0)} x^{(\bar 0)}
\in \Id^{\mathrm{gr}}(M_2(F))$.
\end{example}

Let
$P^{\mathrm{gr}}_n := \langle x^{(g_1)}_{\sigma(1)}
x^{(g_2)}_{\sigma(2)}\ldots x^{(g_n)}_{\sigma(n)}
\mid g_i \in G, \sigma\in S_n \rangle_F \subset F \langle X^{\mathrm{gr}} \rangle$, $n \in \mathbb N$.
Then the number $$c^{\mathrm{gr}}_n(A):=\dim\left(\frac{P^{\mathrm{gr}}_n}{P^{\mathrm{gr}}_n \cap \Id^{\mathrm{gr}}(A)}\right)$$
is called the $n$th \textit{codimension of graded polynomial identities}
or the $n$th \textit{graded codimension} of $A$.

The analog of Amitsur's conjecture for graded codimensions can be formulated
as follows.

\begin{conjecture} There exists
 $\PIexp^{\mathrm{gr}}(A):=\lim\limits_{n\to\infty} \sqrt[n]{c^\mathrm{gr}_n(A)} \in \mathbb Z_+$.
\end{conjecture}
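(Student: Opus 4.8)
The plan is to determine $\PIexp^{\mathrm{gr}}(A)$ explicitly through the graded Wedderburn structure, following the strategy of Giambruno and Zaicev for ordinary codimensions, which the gradedness of the Jacobson radical now makes available for an arbitrary group. Since graded codimensions are unchanged under extension of the base field, I would first replace $F$ by its algebraic closure and assume $F$ algebraically closed; as $\dim A<\infty$, only finitely many homogeneous components $A^{(g)}$ are nonzero. By the Corollary to Theorem~\ref{TheoremRadicalHSubComod} the radical $J:=J(A)$ is a graded ideal, and by the graded Wedderburn--Mal'cev corollary we may write $A=B\oplus J$ with $B=B_1\oplus\cdots\oplus B_q$ a sum of graded-simple algebras and $J^{p+1}=0$ for some $p$. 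Set
$$d:=\max\{\dim_F(B_{i_1}\oplus\cdots\oplus B_{i_r}):\ B_{i_1}JB_{i_2}J\cdots JB_{i_r}\neq 0\},$$
the maximum taken over tuples of pairwise distinct indices (and $d:=0$ if $B=0$). The claim is that $\PIexp^{\mathrm{gr}}(A)=d$, which is manifestly a nonnegative integer.

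For the lower bound I would fix a tuple $B_{i_1},\dots,B_{i_r}$ realizing $d$, together with homogeneous $b_k\in B_{i_k}$ and homogeneous $w_k\in J$ for which $b_1w_1b_2w_2\cdots w_{r-1}b_r\neq 0$. Inside each graded-simple block one can alternate multilinear variables over a full homogeneous basis of $B_{i_k}$ without killing the product, so I would assemble multilinear graded polynomials that alternate on $r$ large disjoint sets evaluated in the respective blocks, glued by the fixed connecting elements $w_k$. These are non-identities of $A$, and the associated $S_n$-irreducibles have Young diagrams of a shape governed by $d$; the hook-length asymptotics for their degrees then yield $c_n^{\mathrm{gr}}(A)\geq C_1 n^{-\beta}d^n$ for some $C_1>0$ and $\beta$.

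For the upper bound I would exploit $J^{p+1}=0$: after substituting a homogeneous basis of $A=B\oplus J$ into a multilinear monomial, at most $p$ factors lie in $J$, so every nonzero evaluation threads through at most $p+1$ semisimple blocks whose combined dimension does not exceed $d$. Decomposing the $n$th graded cocharacter $\chi_n^{\mathrm{gr}}(A)=\sum_\lambda m_\lambda\chi_\lambda$ with respect to the $S_n$-action on the multilinear component together with the finite grading data, I would show that $m_\lambda=0$ unless the diagram of $\lambda$ fits, up to a bounded number of columns, into a horizontal strip of height $d$; summing the degrees over such $\lambda$ gives $c_n^{\mathrm{gr}}(A)\leq C_2 n^{\gamma}d^n$. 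Crucially, because only finitely many homogeneous components occur, the number of relevant grading patterns on $n$ places is polynomially bounded, and this is what carries the argument over to a non-abelian and possibly infinite $G$, where the passage to the dual group used for abelian groups in \cite{GordienkoKotchetov} is unavailable.

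Combining the two estimates gives
$$C_1 n^{-\beta}d^n\leq c_n^{\mathrm{gr}}(A)\leq C_2 n^{\gamma}d^n,$$
so $\sqrt[n]{c_n^{\mathrm{gr}}(A)}\to d\in\mathbb{Z}_+$ and the conjecture follows. I expect the upper bound to be the main obstacle: one must bound the cocharacter multiplicities uniformly in $n$, and, lacking the character theory of an abelian grading group, the entire estimate has to be organized around the finite-dimensional comodule structure provided by the graded Wedderburn decomposition. This is precisely the point at which the co-stability of $J(A)$ from Theorem~\ref{TheoremRadicalHSubComod} is indispensable, since without a graded radical there is no graded $A=B\oplus J$ on which to build the argument.
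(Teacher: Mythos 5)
Your proposal gets the structural skeleton right~--- the base-field extension, the graded radical via Theorem~\ref{TheoremRadicalHSubComod}, the graded Wedderburn--Mal'cev decomposition $A=B\oplus J$, and the correct formula for the integer $d$~--- but the two analytic halves of your argument are asserted rather than proved, and they are exactly the content that the paper deliberately does \emph{not} re-prove, invoking instead Theorem~\ref{TheoremMainHAssoc} (that is, \cite[Theorem~1]{ASGordienko8}) as a black box. The claim that ``inside each graded-simple block one can alternate multilinear variables over a full homogeneous basis of $B_{i_k}$ without killing the product'' is the hardest point of the whole theory: for an arbitrary, possibly infinite and non-abelian group $G$, a finite dimensional graded-simple algebra need not be simple, so the classical construction of Giambruno and Zaicev \cite{ZaiGia} (which rests on matrix units and central polynomials of $M_k(F)$) does not apply to the blocks $B_{i_k}$; producing the required multialternating non-identities for $H$-simple (here: graded-simple) algebras is precisely what the machinery of \cite{ASGordienko8} (and, for finite $G$, of \cite{AljaGiaLa}) was built to do. In addition, your statement that ``the number of relevant grading patterns on $n$ places is polynomially bounded'' is false as written: with $m$ nonzero homogeneous components there are $m^n$ such patterns. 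What is polynomially bounded is the number of multidegree types $(n_1,\ldots,n_m)$ with $n_1+\dots+n_m=n$, and the cocharacter upper bound has to be organized around those types together with the multinomial coefficients; this is repairable but is not the argument you gave.

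It is also worth recording how the paper actually proceeds, since it is quite different from your plan: it never runs a Giambruno--Zaicev-type analysis in the graded language. Instead it converts the $G$-grading into a generalized $(FG)^*$-action (Example~\ref{ExampleGrHGen}, where finite dimensionality of $A$ substitutes for a comultiplication on $(FG)^*$), proves that graded codimensions coincide with $(FG)^*$-codimensions via an isomorphism of relatively free algebras (Lemma~\ref{LemmaHGenGrCodimEqual}), verifies the hypotheses of Theorem~\ref{TheoremMainHAssoc} using Theorem~\ref{TheoremRadicalHSubComod} (the radical is a graded ideal, hence an $H$-submodule) and Lemma~\ref{LemmaWedderburnHcomod} (the semisimple quotient splits into graded-simple ideals), and then cites that theorem. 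So to make your route self-contained you would have to supply full proofs of both the lower and the upper bound for graded-simple blocks over arbitrary $G$~--- essentially re-proving \cite[Theorem~1]{ASGordienko8}; alternatively, the gap closes immediately if you insert the paper's reduction through Lemma~\ref{LemmaHGenGrCodimEqual}, after which your sketched bounds are no longer needed.
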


\begin{theorem}\label{TheoremMainGrAssoc}
Let $A$ be a finite dimensional non-nilpotent associative algebra
over a field $F$ of characteristic $0$, graded by any group $G$. Then
there exist constants $C_1, C_2 > 0$, $r_1, r_2 \in \mathbb R$, $d \in \mathbb N$
such that $C_1 n^{r_1} d^n \leqslant c^{\mathrm{gr}}_n(A) \leqslant C_2 n^{r_2} d^n$
for all $n \in \mathbb N$.
\end{theorem}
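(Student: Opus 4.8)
The plan is to run the Giambruno--Zaicev strategy for the ordinary Amitsur conjecture in the form adapted to additional algebraic structure, feeding it the graded structure theory established in the previous sections. Since $\dim A < \infty$, the support $\{g\in G\mid A^{(g)}\neq 0\}$ is finite, and any monomial containing a variable whose degree lies outside the support is a graded identity; hence $\Id^{\mathrm{gr}}(A)$ and the numbers $c^{\mathrm{gr}}_n(A)$ are unchanged if we replace $G$ by the finitely generated subgroup generated by the support. Viewing $A$ as an $FG$-comodule algebra (as in the Example above), Theorem~\ref{TheoremRadicalHSubComod} shows $J:=J(A)$ is a graded ideal, the graded Wedderburn--Mal'cev theorem above gives a graded decomposition $A=B\oplus J$ with $B$ a maximal graded semisimple subalgebra, and Lemma~\ref{LemmaWedderburnHcomod} applied to $B$ yields graded-simple ideals $B=B_1\oplus\cdots\oplus B_q$. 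I then define the candidate exponent
\[
d:=\max\dim\bigl(B_{i_1}\oplus\cdots\oplus B_{i_r}\bigr),
\]
the maximum taken over distinct indices $i_1,\dots,i_r$ (with $r\geq 1$) such that $B_{i_1}JB_{i_2}J\cdots JB_{i_r}\neq 0$; non-nilpotence of $A$ forces $B\neq 0$, so $d\geq 1$. This $d$ measures the largest chain of semisimple blocks linked through the radical, which is exactly the data governing the leading term of $c^{\mathrm{gr}}_n(A)$.

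For the upper bound I would pass to the $S_n$-cocharacter of the multilinear graded component $P^{\mathrm{gr}}_n/(P^{\mathrm{gr}}_n\cap\Id^{\mathrm{gr}}(A))$ and write it as $\sum_{\lambda\vdash n}m_\lambda\chi^\lambda$. Fixing $s$ with $J^s=0$ and using $A=B_1\oplus\cdots\oplus B_q\oplus J$, a standard alternation argument shows that a multilinear polynomial alternating on sufficiently many sets whose nonzero evaluations would require more than $d$ independent semisimple directions separated by more than $s$ radical factors must vanish. Consequently $m_\lambda=0$ unless $\lambda$ lies in the hook-type strip $\lambda_{d+1}\leq k$ for a constant $k$ depending only on $d$, $s$, and the support, and each $m_\lambda$ is polynomially bounded. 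Summing $\dim S^\lambda$ over this region and invoking the classical dimension estimate for partitions with at most $d$ long rows gives $c^{\mathrm{gr}}_n(A)=\sum_\lambda m_\lambda\dim S^\lambda\leq C_2 n^{r_2}d^n$.

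For the lower bound I would fix indices $i_1,\dots,i_r$ realizing $d$ and elements $w_1,\dots,w_{r-1}\in J$ with $B_{i_1}w_1 B_{i_2}\cdots w_{r-1}B_{i_r}\neq 0$, choose graded bases of the $B_{i_j}$, and assemble a multilinear graded polynomial of degree $n$ that alternates on roughly $n/d$ disjoint sets of size $d$ filled by these basis vectors and glued by the fixed elements $w_j$. Evaluating the symmetrized polynomial on the corresponding generic homogeneous elements, the surviving block product above is nonzero, so the polynomial is not in $\Id^{\mathrm{gr}}(A)$; counting the independent multialternating tableaux of this shape via the hook-length formula then produces $c^{\mathrm{gr}}_n(A)\geq C_1 n^{r_1}d^n$, completing the two-sided estimate with the same base $d$.

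The hard part will be the lower bound, and specifically the bookkeeping that keeps the constructed polynomial and its evaluation homogeneous while ensuring that each graded-simple block $B_{i_j}$ genuinely supports an alternating (Capelli-type) polynomial of size $\dim B_{i_j}$ that does not vanish. This is exactly where the results of the preceding sections are indispensable: it is the gradedness of $J$ (Theorem~\ref{TheoremRadicalHSubComod}) together with the graded Wedderburn--Mal'cev decomposition and the graded Wedderburn--Artin splitting (Lemma~\ref{LemmaWedderburnHcomod}) that make $B_{i_j}$, $J$, and the linking elements $w_j$ honestly graded objects, so the alternating polynomial is built from homogeneous variables and its nonzero value respects the $G$-grading. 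Over an arbitrary, possibly non-abelian and infinite, group $G$ this homogeneity is precisely the ingredient that was previously missing and that reduces the theorem to the now-standard codimension machinery.
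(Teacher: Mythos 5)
Your structural reductions coincide with the paper's: Theorem~\ref{TheoremRadicalHSubComod} makes $J(A)$ a graded ideal, the graded Wedderburn--Mal'cev corollary and Lemma~\ref{LemmaWedderburnHcomod} give the decomposition into graded-simple components, and your candidate exponent $d$ is the correct one. The divergence is in what comes next. The paper does not re-run any codimension machinery: by Example~\ref{ExampleGrHGen} the grading makes $A$ an algebra with a generalized $H$-action for $H=(FG)^*$ (this works for arbitrary, even infinite non-abelian $G$ precisely because $\dim A<\infty$), Lemma~\ref{LemmaHGenGrCodimEqual} shows $c^{\mathrm{gr}}_n(A)=c^H_n(A)$, and then Theorem~\ref{TheoremMainHAssoc} (the main result of \cite{ASGordienko8}) is applied as a black box. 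You instead propose to redo the Giambruno--Zaicev upper and lower bounds directly for graded identities.

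That is where the genuine gap lies: the steps you label as ``standard'' or ``bookkeeping'' are exactly the ones that are not standard for an arbitrary group, and they constitute the content of the theorem you never invoke. For the lower bound you need, for each graded-simple block $B_{i_j}$, a multilinear polynomial alternating on many disjoint sets of $\dim B_{i_j}$ homogeneous variables with a nonzero evaluation on $B_{i_j}$, glued across the radical elements $w_j$. For simple algebras over an algebraically closed field this is classical (matrix units, Regev--Razmyslov central polynomials), but a graded-simple algebra need not be simple, and for infinite or non-abelian $G$ there is no dual group or Hopf algebra action to fall back on; the existing arguments cover only finite groups \cite{AljaGiaLa, GiaLa} and abelian groups \cite{GordienkoKotchetov}. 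Precisely this nonvanishing multialternating-polynomial theorem for $H$-simple algebras under a generalized Hopf action is the technical core of \cite{ASGordienko8}, i.e.\ of Theorem~\ref{TheoremMainHAssoc}. The same applies, less severely, to your upper bound: the strip condition and the polynomial bound on the multiplicities $m_\lambda$ must be proved in this generality, not quoted. So, as written, your text is a plausible research program rather than a proof; the homogeneity supplied by the earlier sections does not by itself reduce the theorem to known results. To close the gap, either prove the alternating-polynomial theorem for graded-simple algebras graded by an arbitrary group, or --- far more economically, and this is the paper's route --- convert the grading into a generalized $(FG)^*$-action, verify $c^{\mathrm{gr}}_n(A)=c^{(FG)^*}_n(A)$ as in Lemma~\ref{LemmaHGenGrCodimEqual}, and invoke Theorem~\ref{TheoremMainHAssoc}. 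Note also that since $F$ is not assumed algebraically closed, either route needs the standard remark that multilinear graded codimensions are unchanged under extension of scalars to $\overline{F}$.
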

\begin{corollary}
The above analog of Amitsur's conjecture holds for such codimensions.
\end{corollary}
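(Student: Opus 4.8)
The plan is to establish both the upper and lower bounds for $c_n^{\mathrm{gr}}(A)$ by reducing to the structure theory of finite dimensional graded algebras. By the Corollary to Theorem~\ref{TheoremRadicalHSubComod}, the Jacobson radical $J:=J(A)$ is a graded ideal, and by the graded Wedderburn--Mal'cev theorem (the second Corollary) we may write $A = B \oplus J$ where $B = B_1 \oplus \cdots \oplus B_q$ is a maximal semisimple graded subalgebra, each $B_i$ being a graded-simple algebra. This reduces the asymptotics of the graded codimension sequence to a finite amount of data: the graded-simple pieces $B_i$ and the way they act on $J$.

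First I would define the exponent $d$ explicitly. For each subset of the graded-simple components that can be ``linked'' through the radical---that is, for which there exist indices $i_1, \ldots, i_k$ and radical elements allowing a nonzero alternating product $B_{i_1} J B_{i_2} J \cdots J B_{i_k}$---one considers $\dim(B_{i_1} \oplus \cdots \oplus B_{i_k})$, and takes $d$ to be the maximum of these dimensions over all admissible chains. This mirrors the classical (ungraded) description of the PI-exponent from \cite{ZaiGia} and the Abelian-graded case from \cite{GordienkoKotchetov}. Since $d$ is manifestly a nonnegative integer and $A$ is non-nilpotent, $d \geqslant 1$, so the exponent is a well-defined element of $\mathbb{Z}_+$.

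For the upper bound $c_n^{\mathrm{gr}}(A) \leqslant C_2 n^{r_2} d^n$, I would bound the graded cocharacter by translating graded multilinear polynomials into ordinary ones and counting admissible substitutions: a nonzero evaluation must distribute its arguments among the semisimple components and the radical in a way compatible with the grading, and nilpotency of $J$ caps the number of radical insertions by a constant independent of $n$. The combinatorics of $S_n$-representations, together with the hook-length bound on dimensions of irreducibles whose diagrams fit in the appropriate strip, yields the factor $d^n$ up to a polynomial correction $n^{r_2}$. For the lower bound $C_1 n^{r_1} d^n \leqslant c_n^{\mathrm{gr}}(A)$, I would exhibit, for the chain realizing $d$, an explicit family of graded alternating polynomials that are non-identities of $A$, using that the generic elements of each graded-simple $B_i$ together with the linking radical elements produce a nonzero value; the number of such linearly independent polynomials modulo $\Id^{\mathrm{gr}}(A)$ grows like $n^{r_1} d^n$.

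The main obstacle will be handling the graded-simple building blocks $B_i$ over a group $G$ that is \emph{arbitrary} rather than Abelian or finite. In the ungraded and finite-group cases one leans on a Wedderburn-type classification of the simple factors and on cocharacter techniques that presuppose good control over the grading group. Here, since $J(A)$ is graded for any $G$ by Theorem~\ref{TheoremRadicalHSubComod}, one gains the decomposition $A = B \oplus J$, but one must still verify that the graded-simple components admit enough homogeneous ``generic'' elements to realize the lower-bound polynomials and that the upper-bound counting is not inflated by infinitely many grading degrees. I would address this by observing that only finitely many homogeneous components of $A$ are nonzero (as $A$ is finite dimensional), so effectively the grading is by a finite subset of $G$, and the arguments of \cite{GordienkoKotchetov} can be adapted once the Abelianness hypothesis is removed and replaced by the general co-stability result proved above. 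Reconciling these two bounds so that the same integer $d$ appears in both is where the heart of the argument lies.
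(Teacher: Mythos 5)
There is a genuine gap, and it starts with a misreading of what the corollary asks for. The corollary sits directly below Theorem~\ref{TheoremMainGrAssoc} and is an immediate consequence of it: for non-nilpotent $A$ the theorem gives $C_1 n^{r_1} d^n \leqslant c^{\mathrm{gr}}_n(A) \leqslant C_2 n^{r_2} d^n$, so taking $n$th roots and using $C_i^{1/n} \to 1$ and $n^{r_i/n} \to 1$ yields
$$\lim_{n\to\infty} \sqrt[n]{c^{\mathrm{gr}}_n(A)} = d \in \mathbb{Z}_+,$$
while for nilpotent $A$ the Remark gives $c^{\mathrm{gr}}_n(A)=0$ for large $n$, so the limit exists and equals $0$. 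That two-line squeeze argument is the entire content of the corollary's proof. Your proposal never performs this deduction: nowhere do you take $n$th roots or conclude that the limit exists and is an integer. Instead you set out to re-establish the sandwich bounds themselves, i.e.\ to re-prove Theorem~\ref{TheoremMainGrAssoc}, which you are entitled to assume.

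Moreover, even viewed as a proof of the theorem, your plan is not the paper's route and is left incomplete at exactly the hard point. The paper does \emph{not} adapt the classical cocharacter/alternating-polynomial machinery directly to an arbitrary grading group $G$; that is precisely the difficulty it avoids. Instead it converts the grading into a generalized $(FG)^*$-action (Example~\ref{ExampleGrHGen}), proves that graded codimensions equal $H$-codimensions for $H=(FG)^*$ (Lemma~\ref{LemmaHGenGrCodimEqual}), and then invokes Theorem~\ref{TheoremMainHAssoc} from~\cite{ASGordienko8}, whose hypotheses are supplied by the co-stability result (Theorem~\ref{TheoremRadicalHSubComod}) and the $H$-comodule Wedderburn theorem (Lemma~\ref{LemmaWedderburnHcomod}). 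Your sketch defines $d$ via linked chains of graded-simple components and proposes to build alternating non-identities and strip-type cocharacter bounds for arbitrary $G$, but you yourself concede that ``reconciling these two bounds so that the same integer $d$ appears in both is where the heart of the argument lies'' --- that is, the essential step is acknowledged but not carried out. For the corollary as stated, none of this is needed; what is needed, and what is missing, is the passage from the bounds of Theorem~\ref{TheoremMainGrAssoc} to the existence of the limit.
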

\begin{remark}
If $A$ is nilpotent, i.e. $x_1 \ldots x_p \equiv 0$ for some $p\in\mathbb N$, then  $P^{\mathrm{gr}}_n \subseteq \Id^{\mathrm{gr}}(A)$ and $c^{\mathrm{gr}}_n(A)=0$ for all $n \geqslant p$.
\end{remark}

Theorem~\ref{TheoremMainGrAssoc} will be proved in Section~\ref{SectionProofTheoremMainHAssoc}.

\section{Polynomial $H$-identities and their codimensions}\label{SectionH}

In the proof of Theorem~\ref{TheoremMainGrAssoc}, we use the notion of
generalized Hopf action~\cite[Section~3]{BereleHopf}.

Let $H$ be an associative algebra with $1$ over a field $F$.
We say that an associative algebra $A$ is an algebra with a \textit{generalized $H$-action}
if $A$ is endowed with a homomorphism $H \to \End_F(A)$
and for every $h \in H$ there exist $h'_i, h''_i, h'''_i, h''''_i \in H$
such that 
\begin{equation}\label{EqGeneralizedHopf}
h(ab)=\sum_i\bigl((h'_i a)(h''_i b) + (h'''_i b)(h''''_i a)\bigr) \text{ for all } a,b \in A.
\end{equation}

\begin{example}
Let $A$ be a \textit{(left) $H$-module algebra} over a field $F$
where $H$ is a Hopf algebra, i.e. 
$A$ is endowed with a homomorphism $H \to \End_F(A)$ such that
$h(ab)=(h_{(1)}a)(h_{(2)}b)$
for all $h \in H$, $a,b \in A$. Then $A$ is an algebra with a generalized $H$-action.
\end{example}

\begin{example}
Let $A$ be a finite dimensional $H$-comodule algebra over a field $F$
where $H$ is a Hopf algebra. Then by Lemma~\ref{LemmaSubstituteComult},
 $A$ is an algebra with a generalized $H^*$-action.
\end{example}

\begin{example}\label{ExampleGrHGen}
Let $A=\bigoplus_{g\in G}
A^{(g)}$ be an algebra graded by a group $G$ and let
$(FG)^*$ be the algebra dual to the group coalgebra $FG$. In other words, $(FG)^*$ is the algebra
of functions $G \to F$ with the pointwise multiplication. 
Then $A$ has the following natural $(FG)^*$-action: $ha^{(g)} = h(g)a^{(g)}$ for all $g\in G$,
$a^{(g)} \in A^{(g)}$, and $h\in (FG)^*$. If $A$ is finite dimensional, 
$A=\bigoplus_{k=1}^m
A^{(\gamma_k)}$ for some $m \in \mathbb Z_+$, $\gamma_1, \ldots, \gamma_m \in G$.
Therefore, $$h(ab)=\sum_{j,k=1}^m h(\gamma_j \gamma_k) (\pi_{\gamma_j}a)(\pi_{\gamma_k}b)=\sum_{j,k=1}^m h(\gamma_j \gamma_k) (h_{\gamma_j}a)(h_{\gamma_k}b) \text{ for all } 
h\in (FG)^*,\ a,b \in A,$$
where $h_g \in (FG)^*$ are delta functions: $h_{g_1}(g_2)=\left\lbrace\begin{array}{cc} 1, & g_1 = g_2, \\ 0, & g_1 \ne g_2,\end{array}\right.$ 
and $\pi_g \colon A \to A^{(g)}$ are the natural projections with the kernels $\bigoplus_{\substack{t\in G, \\ t\ne g}} A^{(t)}$.
Hence $A$ is an algebra with a generalized $(FG)^*$-action.
\end{example}

Choose a basis $(\gamma_\beta)_{\beta \in \Lambda}$ in $H$ and
denote by $F \langle X | H \rangle$ 
the free associative algebra over $F$ with free formal
 generators $x_i^{\gamma_\beta}$, $\beta \in \Lambda$, $i \in \mathbb N$.
 Let $x_i^h := \sum_{\beta \in \Lambda} \alpha_\beta x_i^{\gamma_\beta}$
 for $h= \sum_{\beta \in \Lambda} \alpha_\beta \gamma_\beta$, $\alpha_\beta \in F$,
 where only finite number of $\alpha_\beta$ are nonzero.
Here $X := \lbrace x_1, x_2, x_3, \ldots \rbrace$, $x_j := x_j^1$, $1 \in H$.
 We refer to the elements
 of $F\langle X | H \rangle$ as $H$-polynomials.
Note that here we do not consider any $H$-action on $F \langle X | H \rangle$.

Let $A$ be an associative algebra with a generalized $H$-action.
Any map $\psi \colon X \to A$ has a unique homomorphic extension $\bar\psi
\colon F \langle X | H \rangle \to A$ such that $\bar\psi(x_i^h)=h\psi(x_i)$
for all $i \in \mathbb N$ and $h \in H$.
 An $H$-polynomial
 $f \in F\langle X | H \rangle$
 is an \textit{$H$-identity} of $A$ if $\bar\psi(f)=0$
for all maps $\psi \colon X \to A$. In other words, $f(x_1, x_2, \ldots, x_n)$
 is an $H$-identity of $A$
if and only if $f(a_1, a_2, \ldots, a_n)=0$ for any $a_i \in A$.
 In this case we write $f \equiv 0$.
The set $\Id^{H}(A)$ of all $H$-identities
of $A$ is an ideal of $F\langle X | H \rangle$.
Note that our definition of $F\langle X | H \rangle$
depends on the choice of the basis $(\gamma_\beta)_{\beta \in \Lambda}$ in $H$.
However such algebras can be identified in the natural way,
and $\Id^H(A)$ is the same.

Denote by $P^H_n$ the space of all multilinear $H$-polynomials
in $x_1, \ldots, x_n$, $n\in\mathbb N$, i.e.
$$P^{H}_n = \langle x^{h_1}_{\sigma(1)}
x^{h_2}_{\sigma(2)}\ldots x^{h_n}_{\sigma(n)}
\mid h_i \in H, \sigma\in S_n \rangle_F \subset F \langle X | H \rangle.$$
Then the number $c^H_n(A):=\dim\left(\frac{P^H_n}{P^H_n \cap \Id^H(A)}\right)$
is called the $n$th \textit{codimension of polynomial $H$-identities}
or the $n$th \textit{$H$-codimension} of $A$.

We need the following theorem:

\begin{theorem}[{\cite[Theorem~1]{ASGordienko8}}]\label{TheoremMainHAssoc} Let $A$ be a finite dimensional non-nilpotent
associative algebra with a generalized $H$-action
where $H$ is an associative algebra with $1$ over an algebraically closed field $F$ of characteristic $0$.
Suppose that the Jacobson radical $J(A)$ is an $H$-submodule
and $$A/J(A) = B_1 \oplus \ldots \oplus B_q \text{ (direct sum of $H$-invariant ideals)}$$
for some $H$-simple algebras $B_i$.
Then there exist constants $C_1, C_2 > 0$, $r_1, r_2 \in \mathbb R$, and $d\in\mathbb N$ such that $$C_1 n^{r_1} d^n \leqslant c^{H}_n(A) \leqslant C_2 n^{r_2} d^n\text{ for all }n \in \mathbb N.$$
\end{theorem}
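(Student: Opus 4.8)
The plan is to pin down the integer $d$ combinatorially and then establish two matching bounds $C_1 n^{r_1} d^n \leqslant c^H_n(A) \leqslant C_2 n^{r_2} d^n$, with the polynomial factors controlled through the representation theory of the symmetric group $S_n$ acting on multilinear $H$-polynomials. First I would record an $H$-equivariant Wedderburn--Mal'cev decomposition: since $\ch F = 0$, the algebra $A/J$ is separable, and as $J := J(A)$ is an $H$-submodule with $A/J = B_1 \oplus \cdots \oplus B_q$ a sum of $H$-invariant ideals, one lifts the semisimple quotient to an $H$-invariant subalgebra $B \cong A/J$ with $A = B \oplus J$ as $H$-modules (the $H$-invariance of the lift being arranged by a char-$0$ averaging argument). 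I would then \emph{define}
$$d := \max \dim\bigl(B_{i_1} \oplus B_{i_2} \oplus \cdots \oplus B_{i_r}\bigr),$$
the maximum taken over all tuples of distinct indices $i_1, \ldots, i_r$ for which $B_{i_1} J B_{i_2} J \cdots J B_{i_r} \neq 0$; non-nilpotence of $A$ guarantees $d \geqslant 1$.

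For the upper bound I would regard $P^H_n/(P^H_n \cap \Id^H(A))$ as a left $S_n$-module (permuting $x_1, \ldots, x_n$) and write its cocharacter as $\chi^H_n(A) = \sum_{\lambda \vdash n} m_\lambda \chi_\lambda$, so that $c^H_n(A) = \sum_\lambda m_\lambda f^\lambda$ with $f^\lambda = \deg \chi_\lambda$. The structural heart is the claim that $m_\lambda = 0$ unless the Young diagram $\lambda$ is confined, up to a bounded tail, to its first $d$ rows: there is a constant $s$ (governed by the nilpotency index of $J$) with $m_\lambda = 0$ whenever $\sum_{i > d} \lambda_i > s$. The mechanism is that an essential highest-weight generator attached to $\lambda$ is a polynomial alternating in disjoint sets of variables of size equal to the number of rows used; were more than $d$ rows active, every substitution from $B \oplus J$ would have to route more than $d$ components' worth of matrix units through radical-separated slots, which by maximality of $d$ and nilpotency of $J$ forces the value to vanish. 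Granting this, the contributing $\lambda$ number only polynomially in $n$, each $m_\lambda$ is polynomially bounded, and the standard hook-length estimate $\sum_{\height \lambda \leqslant d} f^\lambda \leqslant C n^{r_2} d^n$ yields the upper bound.

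For the lower bound I would fix a tuple $(i_1, \ldots, i_r)$ realising $d$ together with radical witnesses $w_1, \ldots, w_{r-1} \in J$ satisfying $B_{i_1} w_1 B_{i_2} w_2 \cdots w_{r-1} B_{i_r} \neq 0$. The decisive ingredient is a regularity lemma for a single finite dimensional $H$-simple algebra $B$ over the algebraically closed field $F$: for every $N$ there is a multilinear $H$-polynomial, alternating in $N$ disjoint sets each of cardinality $\dim B$, whose value under a suitable substitution into $B$ equals a prescribed nonzero matrix unit. This rests on the density fact that the operators generated by the generalized $H$-action together with left and right multiplications act transitively, hence fill $\End_F(B)$, after which a Capelli--Regev-type alternation produces the required non-identity uniformly in $N$. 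I would then distribute roughly $n/d$ alternating layers across the blocks $B_{i_1}, \ldots, B_{i_r}$, joining consecutive groups by variables evaluated at the witnesses $w_j$, to obtain $f \in P^H_n$ with $f \notin \Id^H(A)$ generating an $S_n$-submodule that affords the irreducible of shape $\lambda =$ a $d$-row diagram with rows of length about $n/d$; for such $\lambda$ one has $f^\lambda \geqslant C_1 n^{r_1} d^n$, and the lower bound follows. Combining the two estimates gives the theorem, and in particular $\lim_n \sqrt[n]{c^H_n(A)} = d \in \mathbb N$.

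I expect the principal obstacle to be the lower bound, and within it the \emph{gluing} step: one must guarantee that the product of the per-block non-degenerate polynomials, threaded through the radical witnesses, survives as a non-identity rather than being annihilated by the global alternations imposed by the Young symmetriser. The remedy is to arrange the substitutions so that each block contributes a fixed matrix unit $e^{(i_j)}_{11}$ and the witnesses route these into the single nonzero product $e^{(i_1)}_{11} w_1 \cdots w_{r-1} e^{(i_r)}_{11}$, and then to verify that the alternations internal to distinct blocks act on disjoint variable sets and so cannot interact destructively. Establishing the single-block regularity lemma uniformly in the number of layers, and proving the vanishing criterion $m_\lambda = 0$ for tall diagrams, are the two places where the $H$-simplicity hypothesis and $\ch F = 0$ are genuinely used.
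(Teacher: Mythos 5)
The first thing to note is that this paper does not prove Theorem~\ref{TheoremMainHAssoc} at all: it is imported verbatim from \cite[Theorem~1]{ASGordienko8} and used as a black box in the proof of Theorem~\ref{TheoremMainGrAssoc}, so there is no internal proof to compare against, and your sketch has to be measured against the cited reference. Against that standard, you have reconstructed essentially the right strategy, namely the Giambruno--Zaicev method adapted to generalized $H$-actions: the same definition of $d$ as the maximal total dimension of a radical-linked chain $B_{i_1} J B_{i_2} J \cdots J B_{i_r} \neq 0$ of distinct $H$-simple components; the upper bound via $S_n$-cocharacters supported, up to a tail bounded in terms of the nilpotency index of $J$ and $\dim A$, in a strip of height $d$, with polynomially many contributing $\lambda$ and polynomially bounded multiplicities; and the lower bound via a full-alternation (Regev-style) lemma for a single $H$-simple algebra, which in the reference does rest on the density fact that left and right multiplications together with the operators coming from the $H$-action generate $\End_F(B)$, followed by gluing the per-block alternating polynomials through radical witnesses.

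One step, however, is wrong as stated and should be excised: the claim that the Wedderburn--Mal'cev complement can be lifted to an $H$-invariant subalgebra ``by a char-$0$ averaging argument.'' Here $H$ is merely an associative algebra with $1$ acting via a generalized action; there is no integral or averaging operator available (averaging works for finite group actions or (co)semisimple Hopf algebras, not for an arbitrary unital associative algebra), and an $H$-stable complement need not exist. This is exactly why the hypotheses of the theorem are phrased the way they are: only $J(A)$ is assumed to be an $H$-submodule, and the decomposition into $H$-invariant ideals is assumed in the quotient $A/J(A)$, not inside $A$. The proof in \cite{ASGordienko8} accordingly works with a complement $B$ that is not assumed $H$-stable: evaluations of $H$-polynomials on elements of $B$ may leak into $J$, and the argument tracks the leading term modulo terms carrying strictly more radical factors, which vanish by maximality of the chain realizing $d$ together with nilpotency of $J$. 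Your gluing step implicitly assumes that substitutions and $H$-action values stay inside $B$ and the fixed matrix units; as written it needs this same mod-$J$ bookkeeping. With that repair, your outline coincides with the reference's proof; without it, the lower bound as you present it has a genuine gap.
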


\section{Proof of Theorem~\ref{TheoremMainGrAssoc}. One example}\label{SectionProofTheoremMainHAssoc}

In order to apply Theorem~\ref{TheoremMainHAssoc}, we need the following lemma:

\begin{lemma}\label{LemmaHGenGrCodimEqual}
Let $A=\bigoplus_{g\in G}
A^{(g)}$ be a finite dimensional associative algebra over a field $F$ graded by any group $G$.
 Consider the corresponding generalized $H$-action for $H=(FG)^*$ (see Example~\ref{ExampleGrHGen}).
 Then $c_n^{\mathrm{gr}}(A)=c_n^{H}(A)$ for all $n\in \mathbb N$.
\end{lemma}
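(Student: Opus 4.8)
The plan is to exhibit a linear isomorphism between $P_n^{\mathrm{gr}}$ and $P_n^H$, modulo the respective ideals of identities, and to check that it carries $\Id^{\mathrm{gr}}(A)\cap P_n^{\mathrm{gr}}$ onto $\Id^H(A)\cap P_n^H$. The guiding observation is that, by Example~\ref{ExampleGrHGen}, the delta function $h_{\gamma_k}\in(FG)^*$ acts on $A$ exactly as the projection $\pi_{\gamma_k}$ onto the homogeneous component $A^{(\gamma_k)}$. Thus evaluating the $H$-variable $x_i^{h_{\gamma_k}}$ on an arbitrary $b\in A$ produces the $\gamma_k$-homogeneous component of $b$, and as $b$ runs over $A$ these components run independently over $A^{(\gamma_1)},\ldots,A^{(\gamma_m)}$.

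First I would reduce both sides to finitely many ``delta'' monomials. Since $A$ is finite dimensional, $A=\bigoplus_{k=1}^m A^{(\gamma_k)}$, and I may choose the basis of $H$ defining $F\langle X\mid H\rangle$ so that it contains $h_{\gamma_1},\ldots,h_{\gamma_m}$. On the graded side, any monomial $x_{\sigma(1)}^{(g_1)}\cdots x_{\sigma(n)}^{(g_n)}$ with some $g_j$ outside the support $\{\gamma_1,\ldots,\gamma_m\}$ lies in $\Id^{\mathrm{gr}}(A)$, because $A^{(g_j)}=0$; hence $P_n^{\mathrm{gr}}$ is spanned modulo $\Id^{\mathrm{gr}}(A)$ by the subspace $V_n^{\mathrm{gr}}$ of monomials using only the grades $\gamma_k$. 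On the $H$-side, the single-variable relation $x_i^h-\sum_{k=1}^m h(\gamma_k)\,x_i^{h_{\gamma_k}}$ is an $H$-identity, since both sides act on any $b\in A$ as $hb$; as $\Id^H(A)$ is an ideal, this relation may be substituted into each factor of a monomial, so $P_n^H$ is spanned modulo $\Id^H(A)$ by the subspace $V_n^H$ of delta monomials built from $h_{\gamma_1},\ldots,h_{\gamma_m}$. Consequently $c_n^{\mathrm{gr}}(A)=\dim\bigl(V_n^{\mathrm{gr}}/(V_n^{\mathrm{gr}}\cap\Id^{\mathrm{gr}}(A))\bigr)$ and $c_n^{H}(A)=\dim\bigl(V_n^{H}/(V_n^{H}\cap\Id^{H}(A))\bigr)$.

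Next I would define the linear map $\theta$ sending the monomial $x_{\sigma(1)}^{(\gamma_{k_1})}\cdots x_{\sigma(n)}^{(\gamma_{k_n})}$ of $V_n^{\mathrm{gr}}$ to $x_{\sigma(1)}^{h_{\gamma_{k_1}}}\cdots x_{\sigma(n)}^{h_{\gamma_{k_n}}}$ in $V_n^H$. Both families consist of pairwise distinct free monomials, so $\theta$ is a linear isomorphism $V_n^{\mathrm{gr}}\to V_n^H$. The heart of the argument is to show that $\theta$ matches the identities: for $f=\sum_{\sigma,k} c_{\sigma,k}\,x_{\sigma(1)}^{(\gamma_{k_1})}\cdots x_{\sigma(n)}^{(\gamma_{k_n})}$, evaluating $\theta(f)$ at arbitrary $b_1,\ldots,b_n\in A$ replaces the $j$th factor by $\pi_{\gamma_{k_j}}(b_{\sigma(j)})\in A^{(\gamma_{k_j})}$. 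Setting $a_i^{(k)}:=\pi_{\gamma_k}(b_i)$, the value $\theta(f)(b_1,\ldots,b_n)$ is precisely $f$ evaluated under the homogeneous substitution $x_i^{(\gamma_k)}\mapsto a_i^{(k)}$. Since the map $b_i\mapsto(\pi_{\gamma_1}(b_i),\ldots,\pi_{\gamma_m}(b_i))$ is onto $A^{(\gamma_1)}\times\cdots\times A^{(\gamma_m)}$, the substitutions range over the same set, so $\theta(f)\in\Id^H(A)$ if and only if $f\in\Id^{\mathrm{gr}}(A)$. Hence $\theta\bigl(V_n^{\mathrm{gr}}\cap\Id^{\mathrm{gr}}(A)\bigr)=V_n^H\cap\Id^H(A)$, and $\theta$ descends to an isomorphism of the two quotient spaces, giving $c_n^{\mathrm{gr}}(A)=c_n^H(A)$.

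The main obstacle, and the only place requiring genuine care, is reconciling the two substitution conventions: in a graded identity each graded variable $x_i^{(\gamma_k)}$ is substituted independently by a homogeneous element, whereas in an $H$-identity a single arbitrary element $b_i$ is substituted and the delta functions extract its components. The projection description of $h_{\gamma_k}$ is exactly what makes these conventions produce the same set of evaluations. Once this is pinned down, together with the finiteness of the support (which permits the reduction to finitely many delta monomials and the inclusion of $h_{\gamma_1},\ldots,h_{\gamma_m}$ in a basis of the possibly infinite-dimensional $H$), the remaining verification is routine.
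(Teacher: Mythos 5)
Your proof is correct and is essentially the paper's argument: the paper packages the same correspondence $x_j^{(\gamma_i)} \leftrightarrow x_j^{h_{\gamma_i}}$ into a pair of mutually inverse homomorphisms $\xi,\eta$ between $F\langle X^{\mathrm{gr}}\rangle/\Id^{\mathrm{gr}}(A)$ and $F\langle X\,|\,H\rangle/\Id^{H}(A)$, relying on exactly your two reductions (variables of grade outside the support are graded identities, and $x^h-\sum_i h(\gamma_i)x^{h_{\gamma_i}}\in\Id^{H}(A)$) and the same projection/evaluation matching. Your version merely carries this out directly on the multilinear components rather than on the whole relatively free algebras, which is all the codimension comparison requires.
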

\begin{proof}
Again, let $\lbrace \gamma_1, \ldots, \gamma_m \rbrace := \lbrace g\in G \mid A^{(g)}\ne 0\rbrace$.

Define the homomorphism of algebras $\xi \colon F \langle X | H \rangle \to F \langle X^{\mathrm{gr}}\rangle$ 
by the formula $\xi(x_k^h)=\sum_{i=1}^m h(\gamma_i) x_k^{(\gamma_i)}$, $h \in H$, $k\in\mathbb N$. Note that $\xi(\Id^H(A))\subseteq \Id^{\mathrm{gr}}(A)$
since for any homomorphism $\psi \colon F \langle X^{\mathrm{gr}}\rangle \to A$ of graded algebras
we have $\psi(\xi(x_i^h))=h\psi(\xi(x_i))$ and if $f\in \Id^{H}(A)$, then $\psi(\xi(f))=0$.
Hence we can define $\tilde\xi \colon F \langle X | H \rangle/\Id^{H}(A) \to F \langle X^{\mathrm{gr}}\rangle/\Id^{\mathrm{gr}}(A)$.

Let $h_{\gamma_1}, \ldots, h_{\gamma_m} \in H$ be the delta functions from Example~\ref{ExampleGrHGen}.
 Then $h_{\gamma_i}a \in A^{(\gamma_i)}$
is the $\gamma_i$-component of $a$ for all $a \in A$ and $1 \leqslant i\leqslant m$.
In particular, \begin{equation}\label{EqxhiequivAssoc}
x^h - \sum_{i=1}^m h(\gamma_i)x^{h_{\gamma_i}} \in \Id^{H} (A) \text { for all }h\in H.\end{equation}

 We define the homomorphism of algebras
$\eta \colon F \langle X^{\mathrm{gr}}\rangle \to  F \langle X | H \rangle$ by the formula $\eta(x^{(\gamma_i)}_j)=x^{h_{\gamma_i}}_j$
for all $1\leqslant i\leqslant m$, $j\in\mathbb N$, and $\eta(x^{(g)}_j)=0$
for $g\notin \lbrace \gamma_1, \ldots, \gamma_m \rbrace$. Note that
$\eta(\Id^{\mathrm{gr}}(A)) \subseteq \Id^{H}(A)$.
Indeed, if $\psi \colon F \langle X | H \rangle \to A$ is a homomorphism of algebras
such that  $\psi(x_i^h)=h\psi(x_i)$ for all $h\in H$ and $i\in\mathbb N$,
then $\psi(\eta(x^{(\gamma_i)}_j)) = {h_{\gamma_i}} \psi(x_j) \in A^{(\gamma_i)}$
for any choice of $\psi(x_j)\in A$. Hence $\psi\eta \colon F \langle X^{\mathrm{gr}}\rangle \to A$
is a graded homomorphism, $\psi(\eta(\Id^{\mathrm{gr}}(A)))=0$, and $\eta(\Id^{\mathrm{gr}}(A)) \subseteq \Id^{H}(A)$.
Thus we can define $\tilde\eta \colon F \langle X^{\mathrm{gr}}\rangle/\Id^{\mathrm{gr}}(A) \to   F \langle X | H \rangle/\Id^{H}(A)$. 

Denote by $\bar f$ the image of a polynomial $f$ in a factor space.
Then $$\tilde\xi\tilde\eta(\bar x^{(\gamma_i)}_j) = \sum_{k=1}^m h_{\gamma_i}(\gamma_k) \bar x^{(\gamma_k)}_j
=\bar x^{(\gamma_i)}_j
\text{ for all }1 \leqslant i \leqslant m,\ j\in\mathbb N.$$ Since $x^{(g)}_j \in \Id^{\mathrm{gr}}(A)$
for all $g\notin \lbrace \gamma_1, \ldots, \gamma_m \rbrace$, the map $\tilde\xi\tilde\eta$ coincides with
the identity map on the generators. Hence  $\tilde\xi\tilde\eta = \id_{F \langle X^{\mathrm{gr}}\rangle/\Id^{\mathrm{gr}}(A)}$. By~(\ref{EqxhiequivAssoc}), $$\tilde\eta\tilde\xi(\bar x_j^h)=
\tilde\eta\left( \sum_{i=1}^m h(\gamma_i) \bar x_j^{(\gamma_i)} \right)=
 \sum_{i=1}^m h(\gamma_i)\bar x_j^{h_{\gamma_i}} = \bar x_j^h \text{ for all } j \in\mathbb N,\ h \in H.$$
Similarly, we have $\tilde\eta\tilde\xi = \id_{ F \langle X | H \rangle/\Id^{H}}$.
Hence $$ F \langle X | H \rangle/\Id^{H}(A) \cong F \langle X^{\mathrm{gr}}\rangle/\Id^{\mathrm{gr}}(A).$$
In particular, $\frac{P^{H}_n}{P^{H}_n \cap \Id^{H}(A)}
\cong \frac{P^{\mathrm{gr}}_n}{P^{\mathrm{gr}}_n \cap \Id^{\mathrm{gr}}(A)}$
and $c^{\mathrm{gr}}_n(A)=c^{H}(A)$ for all $n\in \mathbb N$.
\end{proof}

\begin{proof}[Proof of Theorem~\ref{TheoremMainGrAssoc}]
By Example~\ref{ExampleGrHGen}, $A$ is an algebra with a generalized $H$-action
for $H=(FG)^*$. By Lemma~\ref{LemmaHGenGrCodimEqual}, graded codimensions of $A$ coincide
with its $H$-codimensions. By Theorem~\ref{TheoremRadicalHSubComod},
$J(A)$ is a graded ideal and, therefore, an $H$-submodule.
By Lemma~\ref{LemmaWedderburnHcomod}, $A/J(A)$ is a direct sum of graded simple algebras.
 Now we use Theorem~\ref{TheoremMainHAssoc}.
\end{proof}

\begin{example}
Let $G$ be the free group with free generators $a_1,\ldots, a_\ell$, $\ell \geqslant 2$, let $F$ be a field of characteristic $0$, and let $k\in\mathbb N$. Recall that the group algebra $FG$ has the natural $G$-grading $$FG=\bigoplus_{g\in G} (FG)^{(g)} \text{ where }(FG)^{(g)} = Fg,\ g\in G.$$ Consider the subalgebra $A$ of $FG$ generated by $1, a_1, \ldots, a_\ell$ and the ideal $I_k \subset A$ generated by all products $a_{i_1}\ldots a_{i_n}$ of length $n\geqslant k$.
Note that both $A$ and $I_k$ are graded and $A/I_k$ is a finite dimensional algebra graded by an infinite non-Abelian group $G$. We claim that there exist $r_1, r_2 \in \mathbb R$, $C_1, C_2 > 0$
such that $C_1 n^{r_1}\leqslant c_n^{\mathrm{gr}}(A/I_k) \leqslant C_2 n^{r_2}$ for all $n\in\mathbb N$.
\end{example}
\begin{proof}
Note that $A/I_k= (F1) \oplus J$ (direct sum of subspaces) where $J$ is the ideal
generated by the images of $a_i$ in $A/I_k$. Moreover, $J$ is the Jacobson radical of $A/I_k$ since $J$ is nilpotent. Using Theorem~\ref{TheoremMainGrAssoc}, Lemma~\ref{LemmaHGenGrCodimEqual}, and the formula in~\cite[Theorem~1]{ASGordienko8},
we get $$\PIexp^{\mathrm{gr}}(A/I_k)=\dim((A/I_k)/J)=1.$$
\end{proof}

\section*{Acknowledgements}

This work started while I was an AARMS postdoctoral fellow at Memorial University of Newfoundland, whose faculty and staff I would like to thank for hospitality. I am grateful to Yuri Bahturin, who suggested that I study polynomial $H$-identities, and to Eric Jespers for helpful discussions.

\end{document}